\newtheorem{thm}{Theorem}[section]
\newtheorem*{thm*}{Theorem}
\newtheorem{lem}[thm]{Lemma}
\newtheorem*{lem*}{Lemma}
\newtheorem*{cor*}{Corollary}
\theoremstyle{definition}
\newtheorem{defn}[thm]{Definition}
\newtheorem{cor}{Corollary}[section]
\newtheorem{ex}[thm]{Example}
\newtheorem{rem}[thm]{Remark}
\numberwithin{equation}{section}
\title{The Kaczmarz Algorithm in Hilbert $C^{*}$-modules}
\author{Daniel Alpay, Chad Berner, and Eric S. Weber}
\begin{document}

\maketitle

\begin{abstract}
The Kaczmarz algorithm in Hilbert spaces is a classical iterative method for stably recovering vectors from inner product data. In this paper, we extend the algorithm to the setting of Hilbert $C^*$-modules and establish analogues of its effectiveness in both finite-dimensional and stationary cases. Consequently, we demonstrate that continuous families of elements in a Hilbert space can be uniformly recovered using the Kaczmarz algorithm. Additionally, we develop a normalized Cauchy transform for continuous families of measures and use it to provide sufficient conditions under which standard frames in Hilbert $C(X)$-modules can be generated by the Kaczmarz algorithm and realized as orbits of bounded operators.
\end{abstract}

\section{Introduction}

In 1937, Stefan Kaczmarz introduced an iterative process for solving linear systems which is now known as the \textit{Kaczmarz algorithm} \cite{Kaczmarz1993Approximate}.   At its root, the algorithm's objective is to recover an unknown vector $x$ from a (finite or infinite) sequence of inner products $\{ \langle x, e_{k} \rangle \}$. The Kaczmarz algorithm can be extended to infinite dimensional Hilbert spaces \cite{Kwapien2001Kaczmarz,Haller2005Kaczmarz} as follows.  Given a sequence of unit vectors $\{e_n\}_{n=0}^{\infty}$ in a Hilbert space $\mathcal{H}$ and $x \in \mathcal{H}$, we define a sequence of approximations $\{x_n\}_{n=0}^{\infty}$ by
\begin{equation}\label{defn:ka}
\begin{aligned}
		x_0 &= \langle x, e_0 \rangle e_0, \\
		x_n &= x_{n-1} + \langle x- x_{n-1}, e_n \rangle e_n, \quad n \geq 1.
\end{aligned}
\end{equation}
The sequence $\{e_n\}_{n=0}^{\infty}$ is said to be \textit{effective} if $\| x_n - x \| \to 0$ for every $x \in \mathcal{H}$. Kaczmarz showed in \cite{Kaczmarz1993Approximate} that any periodic, linearly dense sequence of unit vectors $\{e_n\}_{n = 0}^{\infty}$ in a finite-dimensional Hilbert space is effective. 

We note that the definition of the Kaczmarz update in Equation \eqref{defn:ka} illustrates a crucial feature of the method: the successor approximation is the orthogonal projection of the current estimate onto the solution space of the current equation (i.e. the hyperplane of all $y$ in $H$ determined by $\langle y, e_{n} \rangle = \langle x, e_{n} \rangle$).  Consequently, the method strongly leverages the geometry of Hilbert spaces--indeed, in a heuristic sense, the algorithm converges to the solution because successive approximations are never further away than prior estimates.

Nonetheless, there is interest in extending the Kaczmarz algorithm beyond the Hilbert space context.  Kwapie\'n and Mycielski \cite{Kwapien2001Kaczmarz} consider the question of effectivity in the space $C(\mathbb{T})$.  They consider the problem of whether there exists a sequence $\{ t_{n} \} \subset \mathbb{T}$ such that an unknown function $f$ can be recovered from the data $\{ f(t_{n}) \}$ through successive approximations analogous to the Kaczmarz update.  They conceptualize this problem as \emph{online learning}, and prove that a Kaczmarz inspired algorithm can recover $f$ up to a finite horizon.  A dual Kaczmarz algorithm was introduced in \cite{Aboud2020Dual} for Hilbert spaces in which the estimates for $x$ from the data $\{ e_{n}(x) \}$ (here we use functional notation rather than inner-products) are constructed using a second sequence $\{ f_{n} \}$.  Consequently, an analysis of effectivity in Banach spaces is presented in \cite{Aboud2022Kaczmarz}; the question of effectivity depends on the choice of two sequences--$\{e_{n}\} \subset X$ .  Because projections in Banach spaces are not necessarily norm-decreasing, the convergence analysis is more subtle and unknown in general.  Kwapie\'n and Mycielski observe that by the Banach-Steinhaus theorem, uniform boundedness of the products of projections are necessary for a sequence to be effective.

In this paper, we introduce the Kaczmarz algorithm in the context of Hilbert $C^*$-modules.  In the classical case of a Hilbert space, the convergence analysis of the Kaczmarz algorithm utilizes the spectral radius of a product of projections to show effectivity \cite{Natterer1986Mathematics}.  Additionally, the Pythagorean theorem contributes to the analysis.  Our results in a Hilbert $C^*$-module $H$, we will instead appeal to the notion of Parseval frames in $H$ \cite{Frank2002Frames}.  While the Pythagorean theorem need not hold in $H$, we will leverage a norm-decreasing property of orthogonal decompositions, see Lemma \ref{normdeclem}.

The outline of this paper is as follows: In section \ref{S3}, we begin by developing a Kaczmarz algorithm in the setting of Hilbert $C^{*}$-modules, similarly to the Hilbert space setting. Then we establish necessary and sufficient conditions for effectivity of this algorithm using a Parseval frame-type condition, similarly to the Hilbert space setting. Furthermore, we demonstrate that a finite-dimensional analogue of the algorithm holds as expected in the Hilbert $C^{*}$-module setting.

In section \ref{S4}, we establish necessary and sufficient conditions for effectivity of the Kaczmarz algorithm when the underlying sequence is stationary. Then we show that in this setting when the $C^{*}$-algebra is commutative, the effectivity of the sequence only depends on the fibers of the inner product space. This allows us to conclude that a continuous family of elements of a Hilbert space can be recovered uniformly by an effective stationary sequence. The section concludes by constructing a continuous field of $L^{2}$ spaces and characterizing when exponential functions form an effective sequence in this context.

Finally, section \ref{S5} establishes the image of the inner product space arising from our continuous field of $L^{2}$ spaces from section \ref{S4}. This allows us to define a normalized Cauchy transform for a continuous family of measures. As a consequence, we show that standard Parseval frames in Hilbert $C(X)$-modules—whose inner product spaces align with those from our continuous field—arise from an effective Kaczmarz algorithm and can be realized as orbits of bounded operators.

Throughout this paper, for a ring $A$, we denote $A[[x]]$ as the ring of power series with coefficients in $A$. Additionally, for a Borel probability measure $\mu$ on $[0,1)$, we denote $$L^{2}(\mu)^{+}:=\overline{span\{e^{2\pi i nx} : n\in \mathbb{N}\}}\subseteq L^{2}(\mu).$$

\section{Preliminaries}
We begin by defining Hilbert $C^{*}$ modules. Note that we assume the $C^{*}$-algebras in this paper are always unital to avoid complications with $\mathbb{C}$-linearity and linearity of the $C^{*}$-algebra valued inner products.
\begin{defn}
Let $A$ be a unital $C^{*}$ algebra. A \textbf{pre-Hilbert $A$-module} is a complex vector space and an algebraic (left) $A$-module $H$, equipped with an inner product $\langle \cdot, \cdot\rangle: H^{2}\to A$ with the following properties:
\begin{enumerate}
    \item $\langle x, x\rangle \geq 0$ for all $x\in H$ with equality if and only if $x=0$.
    \item $\langle x, y\rangle =\langle y, x\rangle^{*}$ for all $x,y\in H$.
    \item $\langle ax+by,z\rangle=a\langle x,z\rangle +b\langle y, z\rangle $ for all $x,y,z\in H$ and $a,b\in A$.
\end{enumerate}
Now define $$\|x\|:=\sqrt{\| \langle x, x\rangle \|_{A}}$$ for any $x\in H$. If $H$ is complete with respect to this norm, we say $H$ is a \textbf{Hilbert A-module}.
\end{defn}

The reader should note that even though Hilbert $C^{*}$-modules are Banach spaces equipped with a type of inner product, lots of desirable properties that are available in Hilbert spaces may be unavailable in the Hilbert $C^{*}$-module setting. For example, there exists Hilbert $A$-modules $H$ with bounded $A$-linear maps from $H$ to $A$ that can not be represented as an inner product. See \cite{Frank2002Frames} for explicit examples. If a Hilbert $A$-modules $H$ has the property that all bounded $A$-linear maps from $H$ to $A$ are given by inner products, $H$ is called \textbf{self-dual}.

Furthermore, if $H,K$ are Hilbert $A$-modules, and $B: H\to K$ is a bounded $A$-linear map, there may not be a bounded $A$-linear map $B^{*}: K\to H$ such that
$$\langle Bx,y\rangle =\langle x, B^{*}y\rangle$$ for all $x\in H$ and $y\in K$. If such $B$ have a $B^{*}$ with this property, we say $B$ is \textbf{adjointable} and we call $B^{*}$ the \textbf{adjoint} of $B$.

One property of Hilbert spaces that does carry over nicely in some sense to the Hilbert $C^{*}$-module setting is the Cauchy–Schwarz inequality. Specifically, for any $x,y\in H$ where $H$ is a Hilbert $A$-module, we have
$$\|\langle x, y\rangle \|_{A}\leq \|x\| \|y\|.$$ For a proof, the reader can check out Landsmans's lecture notes \cite{Landsman1998Lecture}.

The proof of this result uses the following fact about $C^{*}$-algebras that we will use several times in our arguments throughout this paper:
\begin{lem}\label{normdeclem}
Suppose that $C,B\geq 0$ as elements of $A$ where $A$ is a unital $C^{*}$-algebra. Then 
$$\|C\|, \|B\|\leq \|C+B\|.$$
\end{lem}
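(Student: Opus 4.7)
The plan is to reduce the inequality to the standard fact that the norm is monotone on the positive cone of a unital $C^*$-algebra, i.e.\ that $0\le a\le b$ in $A$ implies $\|a\|\le\|b\|$. Once this monotonicity is in hand, the lemma is immediate: from $B\ge 0$ we get $C=C+0\le C+B$, and symmetrically $B\le C+B$, whence $\|C\|\le\|C+B\|$ and $\|B\|\le\|C+B\|$.

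To establish the monotonicity step, I would work inside the commutative unital $C^*$-subalgebra generated by a positive element and the unit, and use continuous functional calculus. For a positive $a\in A$, the spectrum satisfies $\sigma(a)\subseteq[0,\infty)$ and the norm equals the spectral radius, so $\|a\|=\max\sigma(a)$. This gives the two equivalent characterizations
\begin{equation*}
a\le \|a\|\cdot 1,\qquad \text{and}\qquad \bigl(\,a\le t\cdot 1,\ t\ge 0\,\bigr)\ \Longrightarrow\ \|a\|\le t,
\end{equation*}
since $t\cdot 1-a$ is positive precisely when $\sigma(a)\subseteq[0,t]$. Now if $0\le a\le b$, applying the first property to $b$ gives $b\le \|b\|\cdot 1$, hence $a\le \|b\|\cdot 1$, and the second property then yields $\|a\|\le\|b\|$.

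The main (and essentially only) subtlety is justifying the equivalence $a\le t\cdot 1 \Leftrightarrow \sigma(a)\subseteq[0,t]$ for self-adjoint $a$; this is where one appeals to the fact that a self-adjoint element $c$ is positive if and only if its spectrum lies in $[0,\infty)$, applied to $c=t\cdot 1-a$ and combined with the spectral mapping theorem. Assembling these ingredients gives the lemma without any further computation.
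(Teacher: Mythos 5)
Your proof is correct, but it takes a genuinely different route from the paper. You argue intrinsically within the $C^*$-algebra: you reduce the statement to monotonicity of the norm on the positive cone ($0\le a\le b\Rightarrow\|a\|\le\|b\|$), and you prove that monotonicity via functional calculus and the order-theoretic characterization $\|a\|=\min\{t\ge 0: a\le t\cdot 1\}$ for positive $a$. The paper instead passes to a faithful representation $A\subseteq B(H')$ via the Gelfand--Naimark theorem and uses the numerical-range formula $\|C\|=\sup_{\|f\|=1}\langle Cf,f\rangle$ for positive operators, from which the inequality is a one-line comparison of suprema. The two arguments lean on different standard facts: yours requires knowing that a self-adjoint element is positive iff its spectrum lies in $[0,\infty)$ (plus spectral mapping and the identity of norm and spectral radius for self-adjoint elements), while the paper's requires the GNS/Gelfand--Naimark representation theorem and the operator-theoretic fact about numerical ranges. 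Yours has the advantage of staying inside the abstract algebra and of isolating the reusable monotonicity principle $0\le a\le b\Rightarrow\|a\|\le\|b\|$; the paper's is shorter given that it freely invokes representations, and the representation-passing device is one the authors reuse elsewhere (e.g.\ in the proof of Theorem \ref{finite-dimcase}), so it is the more economical choice in context. Both proofs are complete and correct.
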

\begin{proof}
Assuming that $C,B\in B(H')$ are positive semi-definite for some Hilbert space $H'$ by the Gelfand-Naimark theorem,
$$\|C\|=sup_{\|f\|=1}\langle Cf, f\rangle \leq sup_{\|f\|=1}\langle (C+B)f, f\rangle=\|C+B\|.$$

Similarly, $$\|B\|\leq \|C+B\|.$$
\end{proof}

Throughout this paper, our development of the Kaczmarz algorithm is closely tied with frames in the Hilbert $C^{*}$-module setting. For our purposes, we are specifically working with standard frames, which are defined by Frank and Larson in \cite{Frank2002Frames}:
\begin{defn}
Let $A$ be a unital $C^{*}$-algebra and $H$ be a Hilbert $A$-module. A sequence $\{x_{n}\}_{n=0}^{\infty}$ is called a \textbf{standard frame} if
there is an $a,b>0$ where
$$a\langle x, x\rangle \leq \sum_{n=0}^{\infty}\langle x, x_{n}\rangle \langle x_{n}, x\rangle \leq b\langle x, x\rangle$$
for all $x\in H$ where the sum in the middle of the equality converges in the norm of $A$.

If the inequality above is an equality, we say $\{x_{n}\}_{n=0}^{\infty}$ is a \textbf{standard Parseval frame}.
\end{defn} For more about frames in the setting of Hilbert $C^{*}$-modules, the reader can see \cite{Arambasic2017Frames}.

The following theorem in \cite{Frank2002Frames} establishes a stable recovery for standard frames via a type of frame operator, analogously to the Hilbert space setting:

\begin{thm*}[Frank and Larson]
Let $A$ be a unital $C^{*}$-algebra and $H$ be a Hilbert $A$-module. Suppose $\{f_{n}\}_{n=0}^{\infty}\subseteq H$ is a standard frame. Then there is a unique adjointable operator $S: H\to H$ such that
$$f=\sum_{n=0}^{\infty}\langle S(f), f_{n}\rangle f_{n}$$
for all $f\in H$.
\end{thm*}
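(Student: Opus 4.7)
The plan is to mimic the Hilbert space frame-operator proof: construct an analysis operator, identify its adjoint as a synthesis operator, obtain a positive invertible frame operator $T := \Theta^{*}\Theta$, and then take $S := T^{-1}$. The new difficulty compared to Hilbert spaces is that adjointability and invertibility must be checked explicitly, since bounded $A$-linear maps need not be adjointable in general.

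First I would introduce the standard Hilbert $A$-module $\ell^{2}(A)$ of sequences $(a_{n}) \in A^{\mathbb{N}}$ with $\sum_{n} a_{n}a_{n}^{*}$ norm-convergent, and define the analysis operator $\Theta : H \to \ell^{2}(A)$ by $\Theta(f) = (\langle f, f_{n}\rangle)_{n}$. The upper frame bound ensures that $\Theta$ is well-defined and bounded, with $\|\Theta(f)\|^{2} \leq b\,\|f\|^{2}$. The candidate adjoint is the synthesis operator $\Theta^{*}((a_{n})) = \sum_{n} a_{n}f_{n}$; the main technical step is proving that this series converges in $H$ for every $(a_{n}) \in \ell^{2}(A)$. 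I would establish this by estimating partial-sum tails $\bigl\|\sum_{n=N}^{M} a_{n}f_{n}\bigr\|^{2}$ using Lemma \ref{normdeclem} together with the upper frame bound applied to the element $g = \sum_{n=N}^{M} a_{n}f_{n}$. Verifying that the resulting operator is genuinely the adjoint of $\Theta$ then reduces to a routine computation with the inner product of $\ell^{2}(A)$ applied to finitely supported sequences and extended by continuity.

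With adjointability in hand, the frame operator $T := \Theta^{*}\Theta$, explicitly given by $T(f) = \sum_{n} \langle f, f_{n}\rangle f_{n}$, is automatically adjointable and positive. The frame inequalities translate directly to $aI \leq T \leq bI$ in the $C^{*}$-algebra $\mathcal{L}(H)$ of adjointable operators on $H$, so the spectrum of $T$ lies in $[a,b]$ and $T$ is invertible in $\mathcal{L}(H)$ by standard spectral theory inside a unital $C^{*}$-algebra. Setting $S := T^{-1}$, which is adjointable as the inverse of an adjointable operator, one obtains the desired representation $f = T(Sf) = \sum_{n}\langle Sf, f_{n}\rangle f_{n}$. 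For uniqueness, any other adjointable $S'$ with $f = \sum_{n}\langle S'f, f_{n}\rangle f_{n}$ for all $f$ satisfies $TS' = I = TS$, and invertibility of $T$ forces $S' = S$.

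The principal obstacle is proving convergence of the synthesis series $\sum_{n} a_{n}f_{n}$ for an arbitrary $(a_{n}) \in \ell^{2}(A)$, i.e.\ the adjointability of $\Theta$: this is not automatic in the module setting and is where Lemma \ref{normdeclem} does real work, by allowing us to control the partial sums of a positive $A$-valued series by their full sum. Once this is in place, the remainder of the argument is essentially spectral theory inside $\mathcal{L}(H)$, paralleling the classical Hilbert space frame-operator construction.
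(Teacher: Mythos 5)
The paper does not prove this statement: it is quoted as a result of Frank and Larson from \cite{Frank2002Frames}, so there is no internal proof to compare against. Your argument is a correct rendition of the standard one from that reference: the only genuinely module-theoretic obstruction is adjointability of the analysis operator $\Theta$, which you rightly reduce to norm-convergence of the synthesis series via the tail estimate $\bigl\|\sum_{n=N}^{M}a_{n}f_{n}\bigr\|\leq\sqrt{b}\,\bigl\|\sum_{n=N}^{M}a_{n}a_{n}^{*}\bigr\|^{1/2}$ (Cauchy--Schwarz plus Lemma \ref{normdeclem} to dominate the partial sums of $\sum_{n}\langle g,f_{n}\rangle\langle f_{n},g\rangle$ by the full sum), and the remainder --- positivity of $T=\Theta^{*}\Theta$ as an element of the unital $C^{*}$-algebra of adjointable operators, invertibility from $aI\leq T\leq bI$, and uniqueness from $TS'=I=TS$ --- is routine spectral theory.
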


Another property that can be lost when moving from the Hilbert space to the Hilbert $C^{*}$-module setting is always having an orthonormal basis.
We now define a reasonable notion of an orthonormal basis and a generating set for Hilbert $C^{*}$-modules: 

\begin{defn}
Let $A$ be a unital $C^{*}$-algebra and $H$ be a Hilbert $A$-module. We say $\{e_{n}\}_{n=0}^{\infty}\subseteq H$ \textbf{generates} $H$ if the $A-$linear combination of elements from $\{e_{n}\}_{n=0}^{\infty}$ is dense in $H$.
\end{defn}

\begin{defn}
Let $A$ be a unital $C^{*}$-algebra and $H$ be a Hilbert $A$-module. We say that $\{e_{n}\}_{n=0}^{\infty}\subseteq H$ is an \textbf{orthonormal basis} if for all $n, k$,
$$\langle e_{n},e_{k}\rangle=\delta_{nk},$$ and $\{e_{n}\}_{n=0}^{\infty}$ generates $H$.
\end{defn}

It is not hard to check that an orthonormal basis in a Hilbert $C^{*}$-module is a standard Parseval frame. However, not every Hilbert $C^{*}$-module possesses even standard frames. To see an example, the reader can check out Li's work in \cite{Li2010Hilbert}. For our purposes, we will only consider countably generated Hilbert $C^{*}$-modules, which always admit frames, see \cite{Frank2002Frames}. For more about countably generated Hilbert $C^{*}$-modules, the reader can see \cite{Raeburn2003Countably}.

We will also draw on theory from the Hardy space in our development of the Kaczmarz algorithm:
\begin{defn}
Recall the Hardy space, which can be defined as follows:
$$H^{2}(\mathbb{D})=\{w\to \sum_{n=0}^{\infty}c_{n}w^{n}: \sum_{n=0}^{\infty}|c_{n}|^{2}<\infty\}.$$
Furthermore, an \textbf{inner function} $b(w)$ is a bounded analytic function on $\mathbb{D}$ such that 
$$||b(w)f(w)||_{H^{2}(\mathbb{D})}=||f(w)||_{H^{2}(\mathbb{D})}$$ for any $f\in H^{2}(\mathbb{D})$.
\end{defn}

\section{A Kaczmarz algorithm for Hilbert $C^{*}$-modules}\label{S3}
We begin by establishing our Kaczmarz algorithm in the Hilbert $C^{*}$-module setting in a manner similar to that of Kwapien and Mycielski in the Hilbert space setting \cite{Kwapien2001Kaczmarz}. Note that we assume our sequences that we use to reconstruct vectors in the Hilbert $C^{*}$-module are not only unit vectors in the Banach space, but also unit vectors in the $C^{*}$-algebra valued inner product sense as well because without this assumption our theory becomes much more complicated.

\begin{defn}\label{kaczmarzdef}
Let $A$ be unital $C^{*}$ algebra and $H$ be a Hilbert $A$-module. Suppose that for $\{e_{n}\}_{n=0}^{\infty}\subseteq H$, $$\langle e_{n},e_{n}\rangle=I$$ for all $n\geq 0$.

For $x\in H$, let
$$x_{0}=\langle x,e_{0}\rangle e_{0} $$
$$x_{n}=x_{n-1}+\langle x-x_{n-1},e_{n}\rangle e_{n}, \ \forall n\geq 1.$$
If $x_{n}\to x$ in norm $\forall x\in H$, then we say \textbf{$\{e_{n}\}_{n=0}^{\infty}$ is effective}.   
\end{defn}

Our first goal is to establish a necessary and sufficient condition for $\{e_{n}\}_{n=0}^{\infty}\subseteq H$ to be effective using a secondary sequence and a Parseval frame condition. This condition is analogous to the Parseval frame condition established by Haller and Szwarc and Kwapien and Mycielski in the Hilbert space setting \cite{Haller2005Kaczmarz, Kwapien2001Kaczmarz}. To achieve this result, we prove two straightforward lemmas:

\begin{lem}\label{orthoglemma}
Let $A$ be a unital $C^{*}$-algebra and $H$ be a Hilbert $A$-module. Suppose that for $\{e_{n}\}_{n=0}^{\infty}\subseteq H$, $$\langle e_{n},e_{n}\rangle=I$$ for all $n\geq 0$. For any $x\in H$,
$$\langle x-x_{n},e_{n}\rangle=0$$ for all $n\geq 0$.
\end{lem}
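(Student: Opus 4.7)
The statement is a direct algebraic consequence of the definition of the Kaczmarz update and the normalization $\langle e_n, e_n\rangle = I$, so the plan is a one-step calculation that only uses left $A$-linearity of the inner product in the first slot (property (3) of the $A$-valued inner product).

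My plan is to handle $n=0$ and $n \geq 1$ in essentially the same way. For $n=0$, I would expand
\[
\langle x - x_0, e_0\rangle = \langle x, e_0\rangle - \langle \langle x, e_0\rangle e_0, e_0\rangle
\]
and then pull the scalar $\langle x, e_0\rangle \in A$ out of the first slot using left $A$-linearity, yielding $\langle x, e_0\rangle - \langle x, e_0\rangle \langle e_0, e_0\rangle$, which equals $0$ since $\langle e_0, e_0\rangle = I$.

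For $n \geq 1$, the same idea works. Using the recursion $x_n = x_{n-1} + \langle x - x_{n-1}, e_n\rangle e_n$, I would write
\[
\langle x - x_n, e_n\rangle = \langle x - x_{n-1}, e_n\rangle - \langle \langle x - x_{n-1}, e_n\rangle e_n, e_n\rangle,
\]
pull the coefficient $\langle x - x_{n-1}, e_n\rangle \in A$ out of the first slot of the second inner product, and use $\langle e_n, e_n\rangle = I$ to cancel. There is no obstacle here; the only subtlety worth flagging is that in the $C^*$-module setting one must be careful that the inner product is linear in the first variable over $A$ (as assumed by property (3) of the definition), so the coefficients really can be pulled out even though they are elements of a noncommutative algebra. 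The fact that $\langle e_n, e_n\rangle = I$ (the \emph{identity} of $A$, not a scalar) is exactly what makes the cancellation work regardless of side.
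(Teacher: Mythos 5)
Your proposal is correct and matches the paper's proof essentially verbatim: both expand $x - x_n$ via the recursion, pull the $A$-valued coefficient out of the first slot by left $A$-linearity, and cancel using $\langle e_n, e_n\rangle = I$, with the $n=0$ case handled by the same direct computation. No issues.
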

\begin{proof}
Note that for any $x\in H$,
$$x-x_{n}=x-x_{n-1}-\langle x-x_{n-1}, e_{n}\rangle e_{n}, \ \forall n\geq 1.$$
Therefore,
$$\langle x-x_{n},e_{n}\rangle =\langle x-x_{n-1}, e_{n}\rangle -\langle x-x_{n-1},e_{n}\rangle \langle e_{n}, e_{n}\rangle =0, \ \forall n\geq 1.$$ Note the $n=0$ case follows from Definition \ref{kaczmarzdef}.
\end{proof}
Before we prove the other lemma, we define our secondary sequence:
\begin{defn}\label{auxseq}
Let $A$ be a unital $C^{*}$-algebra and $H$ be a Hilbert $A$-module. Suppose that for $\{e_{n}\}_{n=0}^{\infty}\subseteq H$, $$\langle e_{n},e_{n}\rangle=I$$ for all $n\geq 0$.
Define
$$g_{0}=e_{0}$$ and
$$g_{n}=e_{n}-\sum_{k=0}^{n-1}\langle e_{n},e_{k}\rangle g_{k}, \ \forall n\geq 1.$$
We call $\{g_{n}\}_{n=0}^{\infty}$ the \textbf{auxiliary sequence of} $\{e_{n}\}_{n=0}^{\infty}$.
\end{defn}

\begin{lem}\label{auxseqform}
Let $A$ be a unital $C^{*}$-algebra and $H$ be a Hilbert $A$-module. Suppose that for $\{e_{n}\}_{n=0}^{\infty}\subseteq H$, $$\langle e_{n},e_{n}\rangle=I$$ for all $n\geq 0$.
Then $\forall n\geq 0$,
$$x_{n}=\sum_{k=0}^{n}\langle x,g_{k}\rangle e_{k}.$$
\end{lem}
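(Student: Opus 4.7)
The plan is a straightforward induction on $n$, the substantive work being a careful use of the conjugate-linearity of the $A$-valued inner product in its second argument.

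For the base case $n = 0$, Definition \ref{kaczmarzdef} gives $x_0 = \langle x, e_0\rangle e_0$, and since $g_0 = e_0$ by Definition \ref{auxseq}, this is exactly $\langle x, g_0\rangle e_0$.

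For the inductive step, I would assume $x_{n-1} = \sum_{k=0}^{n-1}\langle x, g_k\rangle e_k$ and substitute into the recursion
\[
x_n = x_{n-1} + \langle x - x_{n-1}, e_n\rangle e_n.
\]
Expanding the coefficient of $e_n$ using linearity of the inner product in the first slot gives
\[
\langle x - x_{n-1}, e_n\rangle = \langle x, e_n\rangle - \sum_{k=0}^{n-1}\langle x, g_k\rangle \langle e_k, e_n\rangle.
\]
The key identity I would use is that for $a \in A$ and $y \in H$, one has $\langle x, ay\rangle = \langle ay, x\rangle^* = (a\langle y, x\rangle)^* = \langle x, y\rangle a^*$. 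Applying this with $a = \langle e_n, e_k\rangle$ and $y = g_k$ (and noting $\langle e_n, e_k\rangle^* = \langle e_k, e_n\rangle$), I can rewrite
\[
\langle x, g_k\rangle \langle e_k, e_n\rangle = \langle x, \langle e_n, e_k\rangle g_k\rangle.
\]
Summing over $k$ and pulling the sum inside the inner product (using conjugate-additivity in the second slot), the coefficient of $e_n$ becomes
\[
\Bigl\langle x, \; e_n - \sum_{k=0}^{n-1}\langle e_n, e_k\rangle g_k\Bigr\rangle = \langle x, g_n\rangle
\]
by the defining formula for $g_n$. Combining with the inductive hypothesis yields $x_n = \sum_{k=0}^{n}\langle x, g_k\rangle e_k$.

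The only conceptual point to watch is the placement of the adjoint when pushing an $A$-coefficient across the inner product; everything else is routine substitution. There is no analytic or convergence issue at this stage since the sums involved are finite.
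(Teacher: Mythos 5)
Your proof is correct and follows essentially the same route as the paper's: induction on $n$, with the inductive step resting on rewriting $\sum_{k}\langle x,g_{k}\rangle\langle e_{k},e_{n}\rangle$ as $\langle x,\sum_{k}\langle e_{n},e_{k}\rangle g_{k}\rangle$ so that the defining recursion for $g_{n}$ appears in the second slot. Your explicit justification of the identity $\langle x,ay\rangle=\langle x,y\rangle a^{*}$ is the one detail the paper leaves implicit, and it is handled correctly.
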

\begin{proof}
The case for $n=0$ is obvious.

Suppose the conclusion holds for all $0\leq k\leq n$. Then

$$x_{n+1}=\sum_{k=0}^{n}\langle x,g_{k}\rangle e_{k}+ \langle x-\sum_{k=0}^{n}\langle x,g_{k}\rangle e_{k}, e_{n+1}\rangle e_{n+1}$$
$$=\sum_{k=0}^{n}\langle x,g_{k}\rangle e_{k} + \langle x, e_{n+1}\rangle e_{n+1} -\langle x, \sum_{k=0}^{n}\langle e_{n+1},e_{k}\rangle g_{k}\rangle e_{n+1}$$
$$=\sum_{k=0}^{n+1}\langle x, g_{k}\rangle e_{k}.$$
\end{proof}

\begin{thm}\label{pframecond}
Let $A$ be a unital $C^{*}$-algebra and $H$ be a Hilbert $A$-module. Suppose that for $\{e_{n}\}_{n=0}^{\infty}\subseteq H$, $$\langle e_{n},e_{n}\rangle=I$$ for all $n\geq 0$.
Then $\{e_{n}\}_{n=0}^{\infty}$ is effective if and only if $\{g_{n}\}_{n=0}^{\infty}$ is a standard Parseval frame.
\end{thm}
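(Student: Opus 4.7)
The plan is to establish the single identity
$$\langle x - x_n, x - x_n \rangle + \sum_{k=0}^n \langle x, g_k \rangle \langle g_k, x \rangle = \langle x, x \rangle$$
for every $x \in H$ and every $n \geq 0$. Because both summands on the left-hand side are positive elements of $A$, Lemma \ref{normdeclem} will let me pass between convergence of one summand to $\langle x, x \rangle$ and the other vanishing, so the equivalence in the theorem drops out.

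I would prove the identity by induction on $n$. The base case $n = 0$ is a direct expansion of $\langle x - \langle x, e_0 \rangle e_0,\, x - \langle x, e_0 \rangle e_0 \rangle$ using $\langle e_0, e_0 \rangle = I$ and $g_0 = e_0$. For the inductive step, set $c_n = \langle x - x_{n-1}, e_n \rangle$ and expand $\langle x - x_n, x - x_n \rangle = \langle x - x_{n-1} - c_n e_n,\, x - x_{n-1} - c_n e_n \rangle$. Using $\langle e_n, e_n \rangle = I$, each of the two cross terms reduces to $c_n c_n^*$ and so does the square term $\langle c_n e_n, c_n e_n \rangle$, yielding
$$\langle x - x_n, x - x_n \rangle = \langle x - x_{n-1}, x - x_{n-1} \rangle - c_n c_n^*.$$
To close the induction it remains to show $c_n = \langle x, g_n \rangle$. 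Using Lemma \ref{auxseqform} to write $x_{n-1} = \sum_{k=0}^{n-1} \langle x, g_k \rangle e_k$, one gets $\langle x_{n-1}, e_n \rangle = \sum_{k=0}^{n-1} \langle x, g_k \rangle \langle e_k, e_n \rangle$, and applying $\langle x, \cdot \rangle$ to the defining recursion for $g_n$ (together with the module identity $\langle x, a y \rangle = \langle x, y \rangle a^*$) gives $\langle x, g_n \rangle = \langle x, e_n \rangle - \sum_{k=0}^{n-1} \langle x, g_k \rangle \langle e_k, e_n \rangle = c_n$.

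Given the identity, both directions are immediate. If $\{e_n\}$ is effective, then $\|x - x_n\|^2 = \| \langle x - x_n, x - x_n \rangle \|_A \to 0$, and since $\sum_{k=0}^n \langle x, g_k \rangle \langle g_k, x \rangle = \langle x, x \rangle - \langle x - x_n, x - x_n \rangle$, the partial sums converge in norm to $\langle x, x \rangle$, which is precisely the standard Parseval frame condition for $\{g_n\}$. Conversely, if $\{g_n\}$ is a standard Parseval frame, then the partial sums tend to $\langle x, x \rangle$ in the norm of $A$, so $\langle x - x_n, x - x_n \rangle \to 0$ and hence $\|x - x_n\| \to 0$.

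The only step that requires real care is the identification $c_n = \langle x, g_n \rangle$, because in the $C^*$-module setting scalars from $A$ do not commute and one must track which side they act on. Once that identification is made, Lemma \ref{normdeclem} replaces any appeal to a Pythagorean theorem, which is precisely the substitute the introduction advertises as the main technical device for the module setting.
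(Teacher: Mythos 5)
Your proposal is correct and follows essentially the same route as the paper: both establish the telescoping identity $\langle x,x\rangle=\langle x-x_{n},x-x_{n}\rangle+\sum_{k=0}^{n}\langle x,g_{k}\rangle\langle g_{k},x\rangle$ (the paper expands $x-x_{n-1}=(x-x_{n})+\langle x,g_{n}\rangle e_{n}$ and invokes Lemma \ref{orthoglemma} to kill the cross terms, while you expand in the other direction and verify $\langle x-x_{n-1},e_{n}\rangle=\langle x,g_{n}\rangle$ directly, which is the same bookkeeping). The only cosmetic difference is that your appeal to Lemma \ref{normdeclem} is unnecessary here, since the identity is exact and norm convergence of one summand is equivalent to the vanishing of the other without any positivity argument.
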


\begin{proof}

Note for any $x\in H$ and $n\geq 1$ by Lemma \ref{orthoglemma} and Lemma \ref{auxseqform},
$$\langle x-x_{n-1},x-x_{n-1} \rangle = \langle x-x_{n}+\langle x, g_{n}\rangle e_{n},x-x_{n}+\langle x, g_{n}\rangle e_{n}\rangle$$
$$=\langle x-x_{n}, x-x_{n}\rangle +\langle x,g_{n}\rangle \langle g_{n},x\rangle .$$

It follows that
$$\langle x-x_{0}, x-x_{0} \rangle =\langle x-x_{n}, x-x_{n} \rangle + \sum_{k=1}^{n}\langle x, g_{k}\rangle \langle g_{k}, x\rangle=\langle x, x\rangle -\langle x, g_{0}\rangle \langle g_{0}, x\rangle .$$

Therefore, $\{e_{n}\}_{n=0}^{\infty}$ is effective if and only if

$$\sum_{k=0}^{\infty}\langle x, g_{k}\rangle \langle g_{k}, x\rangle =\langle x,x\rangle $$ in the norm of $A$ for all $x\in H$.
\end{proof}

Our next result is an analogue to Czaja and Tanis's result that says an effective Riesz basis must be an orthonormal basis \cite{Czaja2013Kaczmarz}. Note however, that our result does not require that our effective sequence is a frame:
\begin{thm}
Let $A$ be a unital $C^{*}$-algebra and $H$ be a Hilbert $A$-module. Suppose that for $\{e_{n}\}_{n=0}^{\infty}\subseteq H$, $$\langle e_{n},e_{n}\rangle=I$$ for all $n\geq 0$.
Suppose further that $$\sum_{n=0}^{\infty}c_{n}e_{n}=0 \ \text{for some} \ \{c_{n}\}_{n=0}^{\infty}\subseteq A\implies \ c_{n}=0, \  \forall n.$$
Then $\{e_{n}\}_{n=0}^{\infty}$ is effective if and only if $\{e_{n}\}_{n=0}^{\infty}$ is an orthonormal basis.
\end{thm}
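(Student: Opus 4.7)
The plan is to prove both directions, with the easy direction following quickly and the forward direction requiring the linear independence hypothesis in an essential way. For the converse (orthonormal basis implies effective), I would first verify by induction on $n$ that when $\{e_n\}$ is an orthonormal basis, the auxiliary sequence satisfies $g_n = e_n$ (since $\langle e_n, e_k\rangle = 0$ for $k < n$ collapses the recursion in Definition \ref{auxseq}). The paper notes that an orthonormal basis is automatically a standard Parseval frame, so Theorem \ref{pframecond} immediately yields effectivity.

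For the forward direction, the starting point is to combine Lemma \ref{auxseqform} with effectivity to obtain the reconstruction formula
\[
x = \sum_{k=0}^{\infty}\langle x, g_k\rangle e_k \quad \text{for every } x \in H,
\]
with convergence in norm. As a byproduct, this shows the $A$-linear span of $\{e_n\}$ is dense, so $\{e_n\}$ generates $H$. The main work is then to show orthonormality, which I would do by specializing this reconstruction to $x = e_n$ and exploiting linear independence.

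Indeed, setting $x = e_n$ gives $e_n = \sum_k \langle e_n, g_k\rangle e_k$, which we rearrange as a convergent $A$-linear combination equal to zero. The linear independence hypothesis then forces $\langle e_n, g_n\rangle = I$ and $\langle e_n, g_k\rangle = 0$ for $k \neq n$. From here I would run an induction on $n$ to show simultaneously that $g_n = e_n$ and $\langle e_n, e_k\rangle = 0$ for $k < n$: the base case $g_0 = e_0$ is immediate; assuming $g_k = e_k$ for $k < n$, the vanishing of $\langle e_n, g_k\rangle$ for $k < n$ translates into $\langle e_n, e_k\rangle = 0$ for $k < n$, and substituting into Definition \ref{auxseq} collapses the sum to give $g_n = e_n$. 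Conjugate symmetry then handles the case $k > n$, and $\langle e_n, e_n\rangle = I$ holds by hypothesis.

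The main obstacle, modest as it is, lies in applying the linear independence assumption correctly: the hypothesis is about norm-convergent series $\sum c_n e_n = 0$, so I need to be careful that the rearranged expression $\sum_k c_k e_k = 0$ (with $c_n = \langle e_n, g_n\rangle - I$ and $c_k = \langle e_n, g_k\rangle$ for $k \neq n$) genuinely converges in $H$, which it does because it is literally the difference of the reconstruction and $e_n$. Everything else is bookkeeping around Theorem \ref{pframecond} and Definition \ref{auxseq}.
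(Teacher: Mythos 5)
Your proposal is correct and follows essentially the same route as the paper: the forward direction is identical (apply the reconstruction $e_n=\sum_k\langle e_n,g_k\rangle e_k$, invoke the linear independence hypothesis to get $\langle e_n,g_k\rangle=\delta_{nk}I$, then induct to obtain $g_n=e_n$ and orthonormality). The only divergence is in the converse, where you outsource the key convergence to the paper's earlier unproved remark that an orthonormal basis is a standard Parseval frame and then cite Theorem \ref{pframecond}, whereas the paper proves that convergence directly via a best-approximation inequality $\bigl\|x-\sum_{n=0}^{k}\langle x,e_{n}\rangle e_{n}\bigr\|\leq\bigl\|x-\sum_{n=0}^{k}c_{n}e_{n}\bigr\|$ combined with density of the $A$-linear span; this is precisely the content of the remark you are leaning on, so nothing is lost, but a fully self-contained write-up would include that argument.
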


\begin{proof}
Assume that $\{e_{n}\}_{n=0}^{\infty}$ is effective.
Then for all $k$, 
$$e_{k}=\sum_{n=0}^{\infty}\langle e_{k}, g_{n}\rangle e_{n}\implies \langle e_{k},g_{n}\rangle=0, \ \forall n\neq k \ \text{and} \ \langle e_{k},g_{k}\rangle=I.$$

We prove that for all $n$, $$g_{n}=e_{n}.$$

It is clear that $$g_{0}=e_{0}.$$

Now for $n\geq 1$, $$g_{n}=e_{n}-\sum_{k=0}^{n-1}\langle e_{n},g_{k}\rangle g_{k}=e_{n}.$$

Now suppose that $\{e_{n}\}_{n=0}^{\infty}$ is an orthonormal basis. Just like the Hilbert space setting, consider $\sum_{n=0}^{k}c_{n}e_{n}$ for some $\{c_{n}\}_{n=0}^{k}\subseteq A$ and $x\in H$.
One can show that:
$$\left \langle x-\sum_{n=0}^{k}c_{n}e_{n}, x-\sum_{n=0}^{k}c_{n}e_{n} \right\rangle=$$
$$\left\langle x-\sum_{n=0}^{k}\langle x,e_{n} \rangle e_{n}, x-\sum_{n=0}^{k}\langle x, e_{n}\rangle e_{n} \right\rangle+\left\langle \sum_{n=0}^{k}(\langle x,e_{n}\rangle -c_{n})e_{n},\sum_{n=0}^{k}(\langle x,e_{n}\rangle -c_{n})e_{n} \right\rangle .$$
Therefore, since all these elements are non-negative in $A$, we have
$$\left\|x-\sum_{n=0}^{k}\langle x,e_{n} \rangle e_{n} \right\|\leq \left\|x-\sum_{n=0}^{k}c_{n}e_{n} \right\|.$$
Therefore, since $\{e_{n}\}_{n=0}^{\infty}$ generates $H$, it follows that
$$x=\sum_{n=0}^{\infty}\langle x,e_{n}\rangle e_{n},$$ and it easy to see from Lemma \ref{auxseqform} that $\{e_{n}\}_{n=0}^{\infty}$ is effective.
\end{proof}
\begin{rem}
The basis criteria discussed in \cite{Frank2002Frames}: $$\sum_{n=0}^{\infty}c_{n}e_{n}=0 \ \text{for some} \ \{c_{n}\}_{n=0}^{\infty}\subseteq A\implies \ c_{n}e_{n}=0, \ \forall n.$$ is just as strong as the hypothesis of the previous theorem since we assume that $$\langle e_{n},e_{n}\rangle=I$$ for all $n$.

\end{rem}

Our last result of this section is an analogue of the Kaczmarz algorithm in the finite-dimensional setting. However, to prove it, we are not able to draw on a Pythagorean theorem from the Hilbert space setting. Instead, we prove that if a finite sequence algebraically generates the Hilbert $A$-module $H$, the norm of coefficients that generate elements of $H$ can be controlled, which is what we need to lift the argument to the Hilbert $C^{*}$-module setting.

\begin{lem}\label{frameoplemma}
Suppose that $A$ is a unital $C^{*}$ algebra and that $\{e_{n}\}_{n=0}^{k}$ algebraically generates the Hilbert $A$-module $H$. Then $S:H\to H$ where $$S(f)=\sum_{n=0}^{k}\langle f, e_{n}\rangle e_{n}$$ is a bounded invertible operator.
\end{lem}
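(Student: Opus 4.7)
The plan is to check four things in turn---$S$ is $A$-linear and bounded, $S$ is adjointable and positive, $S$ is bounded below, and $S$ is surjective---after which bijectivity together with boundedness and the open mapping theorem yield a bounded inverse.

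I would dispose of the easy items first. $A$-linearity is immediate from the definition. For boundedness, iterating the triangle inequality and applying the Cauchy-Schwarz inequality for Hilbert $C^{*}$-modules together with $\|ax\|\leq\|a\|_{A}\|x\|$ gives $\|S(f)\|\leq\bigl(\sum_{n=0}^{k}\|e_{n}\|^{2}\bigr)\|f\|$. A direct manipulation of axioms (2) and (3) of the inner product then shows $\langle S(f),g\rangle=\langle f,S(g)\rangle$, so $S$ is self-adjoint (in particular adjointable), and $\langle S(f),f\rangle=\sum_{n=0}^{k}\langle f,e_{n}\rangle\langle e_{n},f\rangle$ is a sum of positive elements of $A$, hence positive.

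The technical heart is the quantitative coefficient bound highlighted just before the lemma. I would introduce the bounded $A$-linear map $T:A^{k+1}\to H$, $T(a_{0},\ldots,a_{k})=\sum_{n=0}^{k}a_{n}e_{n}$; the algebraic generation hypothesis says $T$ is surjective, so the open mapping theorem supplies a constant $C>0$ with the property that every $f\in H$ admits a representation $f=\sum_{n=0}^{k}a_{n}e_{n}$ with $\max_{n}\|a_{n}\|_{A}\leq C\|f\|$. Using such a representation, $\|f\|^{2}=\|\langle f,f\rangle\|_{A}=\bigl\|\sum_{n=0}^{k}a_{n}\langle e_{n},f\rangle\bigr\|_{A}\leq C(k+1)\|f\|\max_{n}\|\langle e_{n},f\rangle\|_{A}$. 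On the other hand, each $\langle f,e_{n}\rangle\langle e_{n},f\rangle$ is positive in $A$, so iterating Lemma \ref{normdeclem} gives $\max_{n}\|\langle f,e_{n}\rangle\|_{A}^{2}=\max_{n}\|\langle f,e_{n}\rangle\langle e_{n},f\rangle\|_{A}\leq\|\langle S(f),f\rangle\|_{A}\leq\|S(f)\|\,\|f\|$, the final step being Cauchy-Schwarz. Combining these two chains yields $\|f\|\leq C^{2}(k+1)^{2}\|S(f)\|$, so $S$ is bounded below; in particular $S$ is injective with closed range.

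For surjectivity I would invoke the standard orthogonal complementation theorem for closed-range adjointable operators on Hilbert $C^{*}$-modules: since $S$ is adjointable and has closed range, $H=\operatorname{Range}(S)\oplus\operatorname{Range}(S)^{\perp}$, and $\operatorname{Range}(S)^{\perp}=\ker(S^{*})=\ker(S)=\{0\}$ by self-adjointness and injectivity, so $S$ is surjective. The open mapping theorem then delivers a bounded inverse. The principal obstacle in this outline is the quantitative coefficient bound in the previous paragraph: it is what substitutes for the Pythagorean-type estimate used in the Hilbert-space proof and is exactly what the remark preceding the lemma is pointing at. The closing appeal to orthogonal complementation of closed-range adjointable operators is a standard but non-trivial fact from the structure theory of Hilbert $C^{*}$-modules.
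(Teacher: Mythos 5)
Your proposal is correct, and it reaches the conclusion by a genuinely different route than the paper, although both arguments hinge on the same operator $T:A^{k+1}\to H$ and on a Baire-category consequence of its surjectivity. The paper applies the uniform boundedness principle to the family $F_{y}(x)=\langle x,y\rangle$ (over $y$ with $\|\{\langle y,e_{n}\rangle\}\|\leq 1$) to conclude that $T^{*}$ is bounded below; it then shows $\operatorname{Im}(T^{*})$ is closed and finitely generated, hence self-dual and orthogonally complemented in $A^{k+1}$, with $\operatorname{Im}(T^{*})^{\perp}\subseteq\ker(T)$, which yields surjectivity of $S=TT^{*}$ \emph{first}; injectivity is then deduced from surjectivity and self-adjointness. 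You instead apply the open mapping theorem to $T$ to extract a representation $f=\sum_{n}a_{n}e_{n}$ with $\max_{n}\|a_{n}\|_{A}\leq C\|f\|$, and combine it with Lemma \ref{normdeclem} and Cauchy--Schwarz to show directly that $S$ is bounded below, $\|f\|\leq C^{2}(k+1)^{2}\|S(f)\|$; surjectivity then comes from the complementation theorem for closed-range adjointable operators ($H=\operatorname{Range}(S)\oplus\ker(S^{*})$) together with $\ker(S)=\{0\}$. Your chain of estimates checks out: the $C^{*}$-identity gives $\max_{n}\|\langle f,e_{n}\rangle\|^{2}=\max_{n}\|\langle f,e_{n}\rangle\langle e_{n},f\rangle\|$, Lemma \ref{normdeclem} bounds this by $\|\langle S(f),f\rangle\|$, and the two inequalities combine as you state. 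What your version buys is an explicit quantitative lower bound on $S$ (the paper only gets a bounded inverse a posteriori via the open mapping theorem), and it substitutes one standard structural fact (complementation of closed ranges of adjointable operators, e.g.\ Lance's Theorem 3.2) for another (self-duality of finitely generated modules, cited from Frank--Larson); both external inputs are of comparable depth, so neither argument is strictly more elementary.
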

\begin{proof}
Let $T: A^{k+1}\to H$ where
$$T\{c_{n}\}=\sum_{n=0}^{k}c_{n}e_{n}.$$ By hypothesis, $T$ is surjective. Now consider the family of $A$-linear bounded maps $F_{y}: H\to A$
where $$F_{y}(x)=\langle x, y\rangle $$ and $$\|\{\langle y, e_{n}\rangle \}\|_{A^{k+1}}\leq 1.$$
For each $x\in H$, there is $x'\in A^{k+1}$ such that $$Tx'=x.$$ Therefore,
$$\|\langle Tx', y\rangle \| =\|\langle x', \{\langle y,e_{n}\rangle \} \rangle_{A^{k+1}} \| \leq \|x'\|.$$

That is, the family $\{F_{y}\}$ is point-wise bounded and therefore, uniformly bounded by the uniform boundedness principle. Consequently,

$\{y\in H: \|\{\langle y, e_{n}\rangle \}\|\leq 1\}$ is a bounded set. It now follows easily that for any $x\in H$
$$T^{*}x=\{\langle x,e_{n}\rangle\},$$
and there is an $a>0$ such that 
$$a\|x\|\leq \|T^{*}x\|$$
for all $x\in H$. In particular, $Im(T^{*})$ is closed. It is also clear that $Im(T^{*})$ is finitely generated and hence self-dual, see \cite{Frank2002Frames}; therefore,
$$A^{k+1}=Im(T^{*})\oplus Im(T^{*})^{\perp}.$$ Additionally, since it easily follows that 
$$Im(T^{*})^{\perp}\subseteq ker(T),$$
$$Im(S)=Im(TT^{*})=H.$$

Now if $$S(x)=0,$$ since $S$ is self-adjoint,
$$\langle x, S(y)\rangle =0$$ for any $y\in H$.
Because $S$ is surjective, it follows that $x=0$, and $S$ is invertible.

\end{proof}

\begin{thm}\label{finite-dimcase}
Suppose that $A$ is a unital $C^{*}$ algebra and that $\{e_{n}\}_{n=0}^{k}$ algebraically generates the Hilbert $A$-module $H$. Also, suppose that $$\langle e_{n},e_{n}\rangle =I$$ for all $n$. Then $\{e_{n}\}_{n=0}^{\infty}$ is effective where $\{e_{n}\}_{n=0}^{\infty}$ is $k+1$ periodic extension of $\{e_{n}\}_{n=0}^{k}$.  
\end{thm}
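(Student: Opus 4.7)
The plan is to show that after one full period of the Kaczmarz algorithm the residual $x - x_k$ contracts by a factor strictly less than $1$, and then to iterate. Specializing the identity from the proof of Theorem \ref{pframecond} to $n = k$ gives
\begin{equation*}
\langle x - x_k,\, x - x_k\rangle \;=\; \langle x, x\rangle \;-\; \sum_{j=0}^{k} \langle x, g_j\rangle \langle g_j, x\rangle,
\end{equation*}
so one-period contraction is controlled by a lower frame bound for the finite auxiliary sequence $\{g_j\}_{j=0}^k$.

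The key step is to obtain that lower bound. The Gram--Schmidt-type recursion of Definition \ref{auxseq} expresses each $g_n$ as an $A$-linear combination of $e_0,\ldots,e_n$ and, conversely, each $e_n$ as an $A$-linear combination of $g_0,\ldots,g_n$, so $\{g_j\}_{j=0}^k$ algebraically generates $H$ whenever $\{e_j\}_{j=0}^k$ does. Lemma \ref{frameoplemma} applied to $\{g_j\}_{j=0}^k$ therefore yields a bounded invertible frame-type operator $S_g(f) := \sum_{j=0}^k \langle f, g_j\rangle g_j$, which is also adjointable and positive. As a positive invertible element of the $C^{*}$-algebra of adjointable operators on $H$ it has a positive inverse, whence $S_g \geq (1/\|S_g^{-1}\|)\,I$; equivalently,
\begin{equation*}
\sum_{j=0}^{k}\langle x, g_j\rangle \langle g_j, x\rangle \;\geq\; a\,\langle x,x\rangle, \qquad a := 1/\|S_g^{-1}\| > 0.
\end{equation*}
Plugging this into the previous identity and taking norms via Lemma \ref{normdeclem} gives $\|x - x_k\|^2 \leq (1-a)\|x\|^2$.

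To finish, I would exploit periodicity: since the next block $e_{k+1},\ldots,e_{2k+1}$ coincides with $e_0,\ldots,e_k$, applying the same bound with $x - x_k$ in place of $x$ yields $\|x - x_{2(k+1)-1}\|^2 \leq (1-a)^2\|x\|^2$, and by induction $\|x - x_{m(k+1)-1}\|^2 \leq (1-a)^m\|x\|^2 \to 0$. For indices $n$ not of this form the residual is the image of $x - x_{m(k+1)-1}$ under a product of maps of the form $P_j(y) := y - \langle y, e_j\rangle e_j$, and the identity $\langle P_j(y),P_j(y)\rangle + \langle y,e_j\rangle\langle e_j, y\rangle = \langle y,y\rangle$ (with both summands positive) combines with Lemma \ref{normdeclem} to make each $P_j$ norm-decreasing; hence $\|x - x_n\|$ is monotone in $n$ and also converges to $0$. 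I expect the most delicate step to be upgrading the conclusion of Lemma \ref{frameoplemma} from bounded invertibility to the operator inequality $S_g \geq a I$, since Lemma \ref{frameoplemma} is stated without explicit reference to positivity; if the $C^{*}$-algebraic argument above meets resistance, a fallback is to read the lower bound off directly from the bounded left inverse of the surjection $T\colon A^{k+1}\to H$ constructed inside that proof.
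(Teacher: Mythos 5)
Your proof is correct, and although it shares the paper's skeleton --- reduce everything to showing that the one-period map $M=(I-P_{k})\cdots(I-P_{0})$ is a strict contraction, using the identity from the proof of Theorem \ref{pframecond} together with Lemma \ref{frameoplemma} applied to the auxiliary vectors $\{g_{j}\}_{j=0}^{k}$ --- the way you extract the contraction is genuinely different. The paper argues by contradiction: assuming $\|M\|=1$ it produces near-maximizing unit vectors $x_{\epsilon}$, passes to a faithful representation via Gelfand--Naimark, and runs a Cauchy--Schwarz estimate on $\langle x_{\epsilon},x_{\epsilon}\rangle=\sum_{n}\langle S^{-1}x_{\epsilon},g_{n}\rangle\langle g_{n},x_{\epsilon}\rangle$ to contradict the smallness of the frame sum. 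You instead observe that $S_{g}=T_{g}T_{g}^{*}$ is a positive invertible element of the $C^{*}$-algebra of adjointable operators on $H$, so its spectrum lies in $[\|S_{g}^{-1}\|^{-1},\|S_{g}\|]$ and $S_{g}\geq aI$ with $a=\|S_{g}^{-1}\|^{-1}$, which translates (via the standard fact that an adjointable $T$ is positive if and only if $\langle Tx,x\rangle\geq 0$ in $A$ for all $x$) into the explicit lower frame bound $\sum_{j}\langle x,g_{j}\rangle\langle g_{j},x\rangle\geq a\langle x,x\rangle$ and the quantitative rate $\|M^{m}x\|^{2}\leq(1-a)^{m}\|x\|^{2}$. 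The step you flag as delicate does go through: the proof of Lemma \ref{frameoplemma} already exhibits $S$ as $TT^{*}$ with $T^{*}$ adjointable, so positivity and self-adjointness are available, and the spectral inequality for positive invertible elements is standard. Your route buys a cleaner, fully quantitative argument with an explicit geometric decay rate and no $\epsilon$-extraction; the paper's version keeps the machinery closer to classical norm estimates in $B(H')$. Two small points to write out if you formalize this: justify that $\{g_{j}\}_{j=0}^{k}$ algebraically generates $H$ (immediate from the triangular recursion in Definition \ref{auxseq}, and asserted in the paper's own proof), and note that the second-period residual equals $M(x-x_{k})$ because $P_{k+1+j}=P_{j}$, so the iteration is legitimately a bound on powers of the single operator $M$ rather than a reuse of the Parseval identity for the $g_{j}$ with $j>k$ (which are not simply the periodic extension of $g_{0},\ldots,g_{k}$).
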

\begin{proof}
For each $n$, define $P_{n}: H\to H$ where $$P_{n}(x)=\langle x,e_{n}\rangle e_{n}$$ and 
$$M:=(I-P_{k})\cdots (I-P_{0}).$$ First note that for any $x\in H$ and any $n$,
$$\langle x,x\rangle =\langle x-\langle x,e_{n}\rangle e_{n}, x-\langle x,e_{n}\rangle e_{n}\rangle+\langle x,e_{n}\rangle \langle e_{n}, x\rangle.$$
Therefore since every term above is non-negative in $A$, we have $$\|(I-P_{n})x\|\leq \|x\|.$$
Consequently, $$\|M\|\leq 1.$$
We will show that $\|M\| \neq 1,$ which will finish the proof since one can show by induction and Definition \ref{kaczmarzdef} that $$x-x_{n}=(I-P_{n})\dots (I-P_{0})x, \ \forall n\geq0.$$

Suppose for the sake of contradiction that $$\|M\| =1.$$
It follows from the proof of Theorem \ref{pframecond} that for any $\epsilon>0$, there is a unit vector $x_{\epsilon}$ such that
$$\langle x_{\epsilon}, x_{\epsilon}\rangle=\langle Mx_{\epsilon}, Mx_{\epsilon}\rangle+\sum_{n=0}^{k}\langle x_{\epsilon}, g_{n}\rangle \langle g_{n},x_{\epsilon} \rangle$$
and $$\|\langle Mx_{\epsilon}, Mx_{\epsilon}\rangle \|>(1-\epsilon)^{2}$$ where $\{g_{n}\}_{n=0}^{\infty}$ is the auxiliary sequence of $\{e_{n}\}_{n=0}^{\infty}$. Therefore by passing to $B(H')$ via the Gelfand–Naimark theorem, there is a unit $f \in H'$ such that
$$\langle \langle x_{\epsilon}, x_{\epsilon}\rangle f, f\rangle_{H'}>(1-\epsilon)^{2} ,$$
but 
$$\left\langle \sum_{n=0}^{k}\langle x_{\epsilon}, g_{n}\rangle \langle g_{n},x_{\epsilon}\rangle f, f \right\rangle_{H'}< 1-(1-\epsilon)^{2}.$$

Additionally, since $\{e_{n}\}_{n=0}^{k}$ algebraically generates $H$, then $\{g_{n}\}_{n=0}^{k}$ algebraically generates $H$ by Definition \ref{auxseq}.
Therefore, by using $S$ from Lemma \ref{frameoplemma},
$$(1-\epsilon)^{2}<\langle \langle x_{\epsilon}, x_{\epsilon}\rangle f, f\rangle_{H'}\leq \left\| \left(\sum_{n=0}^{k}\langle S^{-1}x_{\epsilon}, g_{n}\rangle \langle g_{n},x_{\epsilon}\rangle \right) f \right\|_{H'}$$
$$\leq \sum_{n=0}^{k}\|\langle S^{-1}x_{\epsilon}, g_{n}\rangle \|_{A} \|\langle g_{n},x_{\epsilon}\rangle f\|_{H'}\leq \sqrt{\sum_{n=0}^{k}\|\langle S^{-1}x_{\epsilon}, g_{n}\rangle \|_{A}^{2}} \sqrt{\sum_{n=0}^{k}\|\langle g_{n},x_{\epsilon}\rangle f \|_{H'}^{2}}$$
$$\leq \|S^{-1}\|\sqrt{\sum_{n=0}^{k}\|g_{n}\|^{2}}\sqrt{1-(1-\epsilon)^{2}}.$$

If $\epsilon$ is small enough, this is a contradiction.
\end{proof}

The following is a simple example of applying Theorem \ref{finite-dimcase} to illustrate what unit vectors in the $A$-valued inner product sense look like in a concrete setting:

\begin{ex}
Let $H'$ be a Hilbert space and $U,V,W\in B(H')$ be co-isometries. Define $$e_{0}=\begin{bmatrix} \frac{1}{\sqrt{2}}U \\ \frac{1}{\sqrt{2}}V\end{bmatrix}, e_{1}=\begin{bmatrix} W \\ 0 \end{bmatrix}.$$ Then $\{e_{n}\}_{n=0}^{\infty}\subseteq B(H')^{2}$, the $2$ periodic extension of $\{e_{0},e_{1}\}$, is effective.
\end{ex}

\section{Stationary case}\label{S4}
In this section, we focus on the stationary case. Remarkably, with some careful adjustments to their original arguments, the results of Kwapień and Mycielski for the stationary case in Hilbert spaces can, in a certain sense, be extended to the setting of Hilbert $C^{*}$-modules. However, this extension resembles the Hilbert space result more closely in the commutative setting. Additionally, the definition of stationary sequences in Hilbert spaces extends naturally to the Hilbert 
$C^{*}$-module setting, just as one would hope:

\begin{defn}
Let $A$ be a unital $C^{*}$-algebra and $H$ be a Hilbert $A$-module. We say that $\{e_{n}\}_{n=0}^{\infty}\subseteq H$ is \textbf{stationary} if there is an $A$-linear isometry $T:H\to H$ such that $$T^{n}e_{0}=e_{n}, \ \forall n.$$
\end{defn}

\begin{rem}  
Note that $T$ in the above definition is inner product preserving by Blecher \cite{Blecher1997New}.
\end{rem}

We remind the reader of Kwapień and Mycielski's results for stationary sequences here.
Also, the following is due to Saraon \cite{Sarason1994Sub-Hardy}, which Kwapien and Mycielski use in their original proof:
\begin{thm*}[Sarason]\label{sarasonthm}
Suppose that $\{\langle e_{0}, e_{n}\rangle\}_{n=0}^{\infty}\subseteq \mathbb{C}$ are the non-negative Fourier coefficients of a Borel probability measure on $[0,1)$. Define $\{c_{n}\}_{n=0}^{\infty}\subseteq \mathbb{C}$ so that $\sum_{n=0}^{\infty}c_{n}z^{n}$ is the inverse of $\sum_{n=0}^{\infty}\langle e_{0}, e_{n}\rangle z^{n}$ in $\mathbb{C}[[z]]$. 

Then $\{\langle e_{0}, e_{n}\rangle\}_{n=0}^{\infty}$ are the non-negative Fourier coefficients of a singular Borel probability measure on $[0,1)$ if and only if $$\sum_{n=1}^{\infty}|c_{n}|^{2}=1$$ if and only if
$\sum_{n=0}^{\infty}c_{n}z^{n}$ is an inner function.
\end{thm*}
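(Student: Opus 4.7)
The plan is to follow Sarason's original approach: relate $\phi(z) := \sum_{n\geq 0} \langle e_0, e_n\rangle z^n$ to the Herglotz transform of $\mu$, then pass through the Cayley transform to convert the singularity of $\mu$ into an inner-function statement on $\mathbb{D}$.

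First, I would expand the Herglotz kernel as a power series in $z$ and use $\langle e_0, e_n\rangle = \int_{\mathbb{T}} \bar\zeta^{n}\,d\mu(\zeta)$ to identify
$$F(z) := \int_{\mathbb{T}}\frac{\zeta+z}{\zeta-z}\,d\mu(\zeta) = 2\phi(z) - 1,$$
so $F$ is a Herglotz function with $F(0) = 1$. In particular $\operatorname{Re}(F(re^{2\pi it}))$ is the Poisson integral $P_{r}\ast\mu$, and the classical Fatou theorem gives $\operatorname{Re}(F(re^{2\pi it})) \to \tfrac{d\mu_{\mathrm{ac}}}{dm}(t)$ for almost every $t \in [0,1)$. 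Hence $\mu$ is singular if and only if $\operatorname{Re}(F)$ has radial limit $0$ almost everywhere on $\mathbb{T}$.

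Second, I would introduce the Cayley transform
$$b(z) := \frac{F(z)-1}{F(z)+1},$$
which is an analytic self-map of $\mathbb{D}$ since $\operatorname{Re}(F) > 0$. The elementary identity $|F+1|^{2} - |F-1|^{2} = 4\operatorname{Re}(F)$ yields $|b(z)|^{2} = 1 - 4\operatorname{Re}(F(z))/|F(z)+1|^{2}$, so letting $r\to 1^{-}$ shows $|b| = 1$ a.e.\ on $\mathbb{T}$ if and only if $\operatorname{Re}(F)\to 0$ a.e. Combined with the first step, this shows $\mu$ is singular if and only if $b$ is an inner function.

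Third, I would identify $b$ with the formal inverse $\psi(z) = \sum_{n\geq 0} c_n z^n$. Because $\operatorname{Re}(\phi) \geq 1/2 > 0$ on $\mathbb{D}$ (from $2\phi = F+1$), $\phi$ is non-vanishing there, so $\psi = 1/\phi$ is an analytic function whose Taylor expansion matches the formal inverse in $\mathbb{C}[[z]]$. A one-line computation gives $b = 1 - \psi$, and since $c_0 = 1/a_0 = 1$ we have $b(z) = -\sum_{n\geq 1} c_n z^n$; therefore $\|b\|_{H^{2}}^{2} = \sum_{n\geq 1}|c_n|^{2}$. Since $b$ lies in the closed unit ball of $H^{\infty}(\mathbb{D})$, the equality $\|b\|_{H^{2}}^{2} = 1$ is equivalent to $b$ being inner. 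Chaining these observations produces the three-way equivalence in the theorem (with the inner function in question being $b = 1 - \sum_{n\geq 0} c_n z^n$). The only real obstacle is the appeal to Fatou's theorem for the boundary behavior of the Poisson integral of a singular measure, which I would simply cite from a standard Hardy-space reference; everything else reduces to power-series algebra and the Cayley-transform identity.
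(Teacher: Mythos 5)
The paper does not actually prove this statement: it is quoted as a known result of Sarason (used by Kwapie\'n and Mycielski in their stationary-sequence analysis), so there is no in-paper argument to compare yours against. Your proof is the standard one and is essentially correct: the identification $F = 2\phi - 1$ of the Herglotz integral with the moment series, Fatou's theorem translating singularity of $\mu$ into $\operatorname{Re}F \to 0$ a.e.\ on the boundary, the Cayley-transform identity $|b|^{2} = 1 - 4\operatorname{Re}F/|F+1|^{2}$, and the computation $b = 1 - 1/\phi$ with $\|b\|_{H^{2}}^{2} = \sum_{n\geq 1}|c_{n}|^{2}$ chain together exactly as you describe. Two small points deserve attention. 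First, in passing from ``$\operatorname{Re}F$ has radial limit $0$ a.e.'' to ``$|b|=1$ a.e.'' you should note that the exceptional set where the radial limit of $b$ equals $1$ (so that $F\to\infty$ and the quotient $4\operatorname{Re}F/|F+1|^{2}$ is indeterminate) has measure zero; this follows from Privalov's uniqueness theorem, since $b\not\equiv 1$ because $b(0)=0$. Second, your computation shows that the inner function is $b = 1-\sum_{n\geq 0}c_{n}z^{n} = -\sum_{n\geq 1}c_{n}z^{n}$, not $\sum_{n\geq 0}c_{n}z^{n}$ itself; as literally printed the third condition of the theorem cannot hold, because $c_{0}=1$ and a function in the closed unit ball of $H^{\infty}(\mathbb{D})$ with modulus $1$ at the origin is constant. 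Your version is the correct one, and it is the one consistent with the Herr--Weber lemma $b(w)=1-1/[C_{\mu}(1)](w)$ quoted later in the paper; the statement as written contains a typo that your proof implicitly corrects.
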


\begin{thm*}[Kwapien and Mycielski]

Let $H'$ be a Hilbert space and $\{e_{n}\}_{n=0}^{\infty}\subseteq H'$ be a stationary sequence of complete unit vectors. Then $\{e_{n}\}_{n=0}^{\infty}$ is effective if and only if $\{\langle e_{0}, e_{n}\rangle \}_{n=0}^{\infty}$ are the non-negative Fourier coefficients of a singular measure or Lebesgue measure.
\end{thm*}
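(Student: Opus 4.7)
The plan is to identify the stationary sequence with $L^{2}(\mu)^{+}$ for the spectral measure $\mu$, compute the auxiliary sequence explicitly using the lower-triangular Toeplitz structure, and then apply Sarason's theorem together with Theorem~\ref{pframecond}. By the remark attributed to Blecher, $T$ preserves inner products, so the sequence $a_{n} := \langle e_{0}, e_{n}\rangle$ is positive definite; Bochner--Herglotz then produces a unique Borel probability measure $\mu$ on $[0,1)$ with $\{a_{n}\}_{n\geq 0}$ as its non-negative Fourier coefficients. Completeness of $\{e_{n}\}$ yields a unitary $H' \cong L^{2}(\mu)^{+}$ sending $e_{n} \leftrightarrow e^{2\pi i n x}$ and $T \leftrightarrow M_{e^{2\pi i x}}$. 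In this picture the Gram matrix $[\langle e_{n}, e_{k}\rangle]_{n \geq k}$ is lower-triangular Toeplitz with entries $\overline{a_{n-k}}$; inverting it using the Sarason coefficients $\{c_{n}\}$ and solving the recursion in Definition~\ref{auxseq} gives
$$g_{n}(x) = \sum_{k=0}^{n} \overline{c_{n-k}}\, e^{2\pi i k x} \quad \text{in } L^{2}(\mu)^{+}.$$

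By Theorem~\ref{pframecond}, effectivity is equivalent to $\{g_{n}\}$ being a standard Parseval frame for $L^{2}(\mu)^{+}$. A direct summation on $|z|=1$ gives the generating-function identity $\sum_{n\geq 0} g_{n}(z) w^{n} = \overline{\psi(\bar w)}/(1-zw)$, where $\psi(w) = \sum_{n} c_{n} w^{n}$. Consequently, for each $f \in L^{2}(\mu)^{+}$ the formal series
$$\Phi_{f}(w) := \sum_{n} \overline{\langle f, g_{n}\rangle}\, w^{n} = \overline{\psi(\bar w)} \int \overline{f(z)}/(1-zw)\, d\mu(z),$$
and Parseval becomes $\|\Phi_{f}\|_{H^{2}}^{2} = \|f\|^{2}_{L^{2}(\mu)}$ for all $f$. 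Testing on $f = e^{2\pi i k x}$ and using the relation $\phi \psi = 1$ in $\mathbb{C}[[z]]$ coefficient-wise, this reduces to the family of identities
$$\bigl\| w^{k} + \overline{\psi(\bar w)}\bigl(a_{1} w^{k-1} + a_{2} w^{k-2} + \cdots + a_{k}\bigr) \bigr\|_{H^{2}}^{2} = 1 \qquad (k \geq 0).$$

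If $a_{n} = 0$ for all $n \geq 1$, then $\mu$ is Lebesgue measure, $\{e_{n}\}$ is an orthonormal basis, and Parseval is immediate. Otherwise, let $k_{0}$ be the smallest index with $a_{k_{0}} \neq 0$; the recursion $\phi\psi = 1$ forces $c_{1} = \cdots = c_{k_{0}-1} = 0$, and the $k = k_{0}$ identity above collapses to $|a_{k_{0}}|^{2}\bigl(\sum_{n\geq 1}|c_{n}|^{2} - 1\bigr) = 0$, hence $\sum_{n\geq 1}|c_{n}|^{2} = 1$; Sarason's theorem then gives that $\psi$ is inner and $\mu$ is singular. Conversely, when $\mu$ is singular, Sarason's theorem yields $\psi$ inner, so multiplication by $\overline{\psi(\bar w)}$ is an isometry on $H^{2}(\mathbb{D})$, and the displayed identities follow by direct expansion. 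The main obstacle is that Parseval is a quadratic identity in $f$, so testing it on the dense set of monomials does not extend to arbitrary $f$ by linearity; overcoming this requires establishing the full sesquilinear version $\sum_{n} \langle f, g_{n}\rangle \overline{\langle h, g_{n}\rangle} = \langle f, h\rangle_{\mu}$ for all pairs of monomials by evaluating $\langle \Phi_{e^{2\pi i k x}}, \Phi_{e^{2\pi i \ell x}}\rangle_{H^{2}}$ using the inner-ness of $\overline{\psi(\bar w)}$ and the Cauchy kernel identity on $|z|=1$, then concluding by density of trigonometric polynomials.
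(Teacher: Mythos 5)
The paper does not actually prove this statement; it is recalled from Kwapie\'n--Mycielski, and the closest internal analogue is the route through Theorem \ref{stationarythm} and Corollary \ref{stationarycomm}. Measured against that, your forward direction is correct and even a bit slicker: the identification of $H'$ with $L^{2}(\mu)^{+}$, the formula $g_{n}=\sum_{k=0}^{n}\overline{c_{n-k}}e^{2\pi ikx}$, the reduction via Theorem \ref{pframecond}, and the collapse of the $k=k_{0}$ Parseval identity to $|a_{k_{0}}|^{2}\bigl(\sum_{n\geq 1}|c_{n}|^{2}-1\bigr)=0$ all check out (one should note that $\overline{\psi(\bar w)}\in H^{2}(\mathbb{D})$ is itself a consequence of the tested identity together with $a_{k_{0}}\neq 0$, so the norm expansion is legitimate).

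The converse, however, has a genuine gap. First, $\psi(w)=\sum_{n\geq 0}c_{n}w^{n}$ is \emph{not} inner when $\mu$ is singular: since $c_{0}=1$ and $\sum_{n\geq 1}|c_{n}|^{2}=1$, one has $\|\psi\|_{H^{2}}^{2}=2$. The inner function in Sarason's theorem is $b=1-\psi=-\sum_{n\geq 1}c_{n}w^{n}$ (the paper's own phrasing of Sarason is loose on this point). Consequently ``multiplication by $\overline{\psi(\bar w)}$ is an isometry on $H^{2}(\mathbb{D})$'' is false, and the claim that the identities $\|\Phi_{z^{k}}\|_{H^{2}}^{2}=1$ ``follow by direct expansion'' does not go through as written. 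Second, even after replacing $\psi$ by $b$, the sesquilinear identities $\langle \Phi_{z^{k}},\Phi_{z^{\ell}}\rangle_{H^{2}}=\langle z^{k},z^{\ell}\rangle_{L^{2}(\mu)}$ are exactly the assertion that the normalized Cauchy transform is isometric on $L^{2}(\mu)^{+}$ for singular $\mu$ --- this is Clark's theorem, a substantive result, not a routine computation with the Cauchy kernel; you have flagged the polarization/density issue but left the actual verification unproved. To close the gap, either cite Clark's unitarity of $V_{\mu}:L^{2}(\mu)\to H^{2}\ominus bH^{2}$ directly, or take the more elementary path that the paper's module-valued argument follows: use the identity \eqref{nastyinduction} together with the computation $\bigl\|\sum_{m=0}^{n}c_{m}e_{m}\bigr\|^{2}=1-\sum_{m=1}^{n}|c_{m}|^{2}\to 0$, the uniform bound $\|x-x_{n}\|\leq\|x\|$, and density of $\mathrm{span}\{e_{j}\}$ to conclude effectivity; this avoids the Cauchy transform entirely in the converse.
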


\begin{thm}\label{stationarythm}
Let $A$ be a unital $C^{*}$-algebra and $H$ be a Hilbert $A$-module.
Suppose that $\{e_{n}\}_{n=0}^{\infty}\subseteq H$ is a stationary sequence that generates $H$, and that $$\langle e_{0},e_{0}\rangle=I.$$ Then $\{e_{n}\}_{n=0}^{\infty}$ is effective if and only if for all $k>0$,
$$\langle e_{k}, e_{0}\rangle \sum_{n=1}^{\infty}c_{n}c_{n}^{*}\langle e_{0},e_{k}\rangle =\langle e_{k}, e_{0}\rangle\langle e_{0},e_{k}\rangle$$ where $$c_{0}=I$$ and for $n\geq 1$, $$c_{n}=-\sum_{k=0}^{n-1}c_{k}\langle e_{0}, e_{n-k}\rangle.$$
\end{thm}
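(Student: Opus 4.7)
The plan is to invoke Theorem \ref{pframecond} so that effectivity becomes the Parseval frame property for $\{g_n\}$, reduce that condition to the generators $\{e_k\}$, and then use stationarity to translate it into the stated algebraic condition. First, by induction on $n$, using Definition \ref{auxseq}, stationarity (which gives $\langle e_n, e_j\rangle = a_{n-j}^*$ for $n \geq j$, where $a_j := \langle e_0, e_j\rangle$), and the recursion for $c_n$, one verifies the closed form
$$g_n = \sum_{j=0}^n c_j^* e_{n-j}.$$
The recursion defining $c_n$ is exactly $c(z)a(z) = I$ in $A[[z]]$; since both series have constant term $I$, also $a(z)c(z) = I$. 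A direct computation then gives $\langle e_0, g_n\rangle = \delta_{n,0} I$, so the Parseval identity is automatic at $x = e_0$. That Parseval on all of $H$ reduces to Parseval on $\{e_k\}$ is routine: $x \mapsto x_n$ is $A$-linear by Lemma \ref{auxseqform}, and each factor $I - P_n$ is a contraction by Lemma \ref{normdeclem}, so the set of effective vectors is a norm-closed $A$-submodule of $H$ containing the generators.

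The key step is a shift recursion for $L_k(n) := \langle e_k, g_n\rangle$. From $e_k = T e_{k-1}$, the inner-product-preserving property of $T$, and the recursion $g_n = Tg_{n-1} + c_n^* e_0$ (direct from the closed form), one obtains
$$L_k(n) = L_{k-1}(n-1) + a_k^* c_n, \qquad n \geq 0, \ L_{k-1}(-1) := 0.$$
Squaring, summing, and setting $S_k := \sum_n L_k(n)L_k(n)^*$, $C_0 := \sum_n c_n c_n^*$, and $\alpha_{k-1} := \sum_m L_{k-1}(m) c_{m+1}^*$ yields the telescoping identity
$$S_k - S_{k-1} = \alpha_{k-1} a_k + a_k^* \alpha_{k-1}^* + a_k^* C_0 a_k.$$
Since $S_0 = I$, effectivity is equivalent to $S_k = S_{k-1}$ for every $k \geq 1$.

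The final step is to identify this family of conditions with the stated ones. Iterating the shift recursion yields the closed form $\alpha_{k-1} = c_k^* + \sum_{j=1}^{k-1} a_j^* C_{k-j}$, where $C_l := \sum_{m \geq 0} c_m c_{m+l}^*$. Substituting this, decomposing $C_0 = I + D$ with $D = \sum_{n \geq 1} c_n c_n^*$, and repeatedly applying the adjoint of the inverse relation, $\sum_{l=0}^n a_l^* c_{n-l}^* = \delta_{n,0} I$, cancels the $-a_k^* a_k$ contributions produced by the $c_k^*$ term in $\alpha_{k-1}$ and collapses $a_k^* C_0 a_k$ into $a_k^* a_k + a_k^* D a_k$, leaving the stated identity $a_k^* D a_k = a_k^* a_k$ together with a residual in the off-diagonal $C_l$ for $l \geq 1$. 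The main obstacle is discharging this residue, which requires combining the vanishing-increment conditions across different $k$ with the cross-Parseval identities $\sum_n \langle e_j, g_n\rangle \langle g_n, e_k\rangle = \langle e_j, e_k\rangle$ obtained by polarization from diagonal Parseval on $e_j, e_k, e_j + e_k, e_j + i e_k$. The noncommutative bookkeeping of these identities is the heart of the calculation; the strategic arc itself is a direct lift of the Hilbert space argument of Kwapień and Mycielski.
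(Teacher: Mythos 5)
Your preliminary steps are sound and in fact coincide with the paper's: the closed form $g_{n}=\sum_{j=0}^{n}c_{j}^{*}e_{n-j}$, the two-sided inverse relation $a(z)c(z)=c(z)a(z)=I$ in $A[[z]]$ (valid since the constant term $a_{0}=I$ is invertible), the consequence $\langle e_{0},g_{n}\rangle=\delta_{n,0}I$, and the reduction of effectivity to the generators via uniform boundedness of the $A$-linear maps $x\mapsto x-x_{n}$ are all correct. The shift recursion $L_{k}(n)=L_{k-1}(n-1)+a_{k}^{*}c_{n}$ and the resulting telescoping identity for $S_{k}-S_{k-1}$ also check out (and for $k=1$ they do yield the stated condition directly, since $\alpha_{0}=c_{1}^{*}=-a_{1}^{*}$). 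The problem is that for $k\geq 2$ the step you label ``discharging the residue'' is precisely where the theorem's content lives, and you have not carried it out. The increment $S_{k}-S_{k-1}$ contains the cross terms $\alpha_{k-1}a_{k}+a_{k}^{*}\alpha_{k-1}^{*}$ involving the off-diagonal sums $C_{l}$, $l\geq 1$, about which the stated conditions say nothing directly; in the noncommutative setting there is no identity among the $C_{l}$ alone that cancels them, and the ``cross-Parseval identities'' you propose to use are themselves consequences of effectivity, so they are available in only one direction of the equivalence and in any case do not produce the cancellation by bookkeeping at the coefficient level.

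What actually kills the cross terms is a return to the module level. Writing $v_{N}=\sum_{m=0}^{N}c_{m}e_{m}$, the identity $\langle v_{N},v_{N}\rangle=I-\sum_{m=1}^{N}c_{m}c_{m}^{*}$ (proved by induction at the end of the paper's argument) shows that the stated condition for index $k$ is equivalent to $a_{k}^{*}v_{N}\to 0$ in $H$; once this is known, every cross term of the form $a_{k}^{*}\langle T^{i}v_{N},T^{i'}v_{N}\rangle a_{k'}=\langle T^{i}(a_{k}^{*}v_{N}),T^{i'}v_{N}\rangle a_{k'}$ tends to $0$ by $A$-linearity and continuity of $T$ and boundedness of $v_{N}$. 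Conversely, extracting $a_{k}^{*}v_{N}\to 0$ from effectivity requires an ordered induction over the subsequence of indices with $\langle e_{k},e_{0}\rangle\neq 0$, peeling off one term at a time. This is exactly the paper's route: it establishes the identity \eqref{nastyinduction} expressing $e_{j}-x_{n}$ as $\sum_{k=1}^{j}\langle e_{k},e_{0}\rangle\bigl(\sum_{m=0}^{n+k-j}c_{m}e_{m+j-k}\bigr)$, runs the induction over the nonvanishing $\langle e_{k},e_{0}\rangle$ to prove that effectivity is equivalent to $\langle e_{k},e_{0}\rangle\sum_{m}c_{m}e_{m}\to 0$ for all $k>0$, and only then converts this to the algebraic condition via the norm computation above. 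Your telescoping of $S_{k}$ is a quadratic shadow of \eqref{nastyinduction}; to close the gap you must replace the $C_{l}$-bookkeeping with the vector-level identity and the inductive peeling, at which point you will have reconstructed the paper's proof rather than found an alternative to it.
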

\begin{proof}
We claim for all $n$, $$g_{n}=\sum_{k=0}^{n}c_{n-k}^{*}e_{k}$$
where $\{g_{n}\}$ is the auxiliary sequence of $\{e_{n}\}$.

The $n=0$ case is obvious.

Suppose the result holds for all $0\leq k\leq n-1$.
Then by the stationary condition and inductive assumption,
$$g_{n}=e_{n}-\sum_{k=0}^{n-1}\langle e_{n-k},e_{0}\rangle \sum_{j=0}^{k}c_{k-j}^{*}e_{j}$$
$$=e_{n}-\sum_{j=0}^{n-1}\left(\sum_{k=j}^{n-1}\langle e_{n-k},e_{0}\rangle c_{k-j}^{*} \right)e_{j}$$
$$=e_{n}+\sum_{j=0}^{n-1}c_{n-j}^{*}e_{j}.$$
The claim follows.

Now we claim that for $n\geq j\geq 1$,
\begin{equation}\label{nastyinduction}
\sum_{k=0}^{n}\langle e_{j},g_{k}\rangle e_{k}-e_{j}=\sum_{k=1}^{j}\langle e_{k},e_{0}\rangle \left(\sum_{m=0}^{n+k-j}c_{m}e_{m+j-k} \right).    
\end{equation}
This claim can be proved by induction and using the previous claim as well as the fact that $\sum_{n=0}^{\infty}c_{n}x^{n}$ is the inverse of $\sum_{n=0}^{\infty}\langle e_{0},e_{n}\rangle x^{n}$ in the ring $A[[x]]$, which follows from the fact that $\sum_{n=0}^{\infty}c_{n}x^{n}$ is the unique left inverse of $\sum_{n=0}^{\infty}\langle e_{0},e_{n}\rangle x^{n}$ by definition. An almost identical claim is given by Kwapien and Mycielski \cite{Kwapien2001Kaczmarz}; therefore, the proof of this claim is omitted. However, if the reader would like to see details of this claim in the scalar case, which transfers easily over to the $C^{*}$-algebra case, they can read John Herr's dissertation \cite{Herr2016Fourier}, which provides thorough and clear details of Kwapien and Mycielski's proof.

Suppose that 
$\{e_{n}\}_{n=0}^{\infty}$ is effective. If $\{e_{n}\}_{n=0}^{\infty}$ is an orthogonal sequence, the conclusion easily follows, so suppose that $\{e_{n}\}_{n=0}^{\infty}$ is not an orthogonal sequence.
Let $\{e_{n_{r}}\}_{r=0}^{R}$ be the subsequence of $\{e_{n}\}_{n=1}^{\infty}$ where 
$$\langle e_{n_{r}}, e_{0}\rangle \neq 0 $$ and $R$ is possibly infinity. We prove for all $r$,
$$\langle e_{n_{r}},e_{0}\rangle \sum_{m=0}^{\infty}c_{m}e_{m}=0$$ by induction on $r$.

For $r=0$, by equation \ref{nastyinduction} and since $\{e_{n}\}_{n=0}^{\infty}$ is effective,
$$\langle e_{n_{0}},e_{0}\rangle \sum_{m=0}^{n}c_{m}e_{m}\to 0.$$

Suppose the claim holds for all $0\leq k\leq r$. By equation \ref{nastyinduction} and since $\{e_{n}\}_{n=0}^{\infty}$ is effective,
$$\sum_{k=0}^{r+1}\langle e_{n_{k}},e_{0}\rangle \left( \sum_{m=0}^{n+k-n_{r+1}}c_{m}e_{m+n_{r+1}-n_{k}} \right) \to 0.$$
Using the inductive hypothesis and remembering that $\{e_{n}\}_{n=0}^{\infty}$ is the orbit of a continuous, $A$-linear operator on $H$, we have
$$\langle e_{n_{r+1}},e_{0}\rangle \sum_{m=0}^{n}c_{m}e_{m}\to 0.$$

Conversely, suppose that for all $r$,
$$\langle e_{n_{r}},e_{0}\rangle \sum_{m=0}^{n}c_{m}e_{m}\to 0.$$
We have seen in the proof of Theorem \ref{finite-dimcase} that $$\|x-x_{n}\|\leq \|x\|$$ for any $x\in H$ and any $n$. Then it follows from equation \ref{nastyinduction} that $\{e_{n}\}_{n=0}^{\infty}$ is effective since the sequence of bounded $A$-linear maps $x\to x-x_{n}$ converge to zero point-wise on an $A$-linear dense subset of $H$, and they are uniformly bounded, so the sequence converges to zero in strong operator topology.
Thus, we have proved that $\{e_{n}\}_{n=0}^{\infty}$ is effective if and only if $$\langle e_{k},e_{0}\rangle \sum_{m=0}^{\infty}c_{m}e_{m}=0$$ for all $k>0$.

We finish the proof by showing that for all $n>0$,
$$\left \langle \sum_{m=0}^{n}c_{m}e_{m}, \sum_{m=0}^{n}c_{m}e_{m} \right\rangle =I-\sum_{m=1}^{n}c_{m}c_{m}^{*}.$$
The $n=1$ case is easy to check.

Suppose the result holds for all $2\leq k<n$.
Then we have
$$\left \langle \sum_{m=0}^{n}c_{m}e_{m}, \sum_{m=0}^{n}c_{m}e_{m} \right\rangle=$$
$$I-\sum_{m=1}^{n-1}c_{m}c_{m}^{*}+c_{n}c_{n}^{*}+\left\langle \sum_{m=0}^{n-1}c_{m}e_{m}, c_{n}e_{n} \right\rangle + \left\langle c_{n}e_{n}, \sum_{m=0}^{n-1}c_{m}e_{m} \right\rangle=$$
$$I-\sum_{m=1}^{n-1}c_{m}c_{m}^{*}+c_{n}c_{n}^{*}+\left(\sum_{m=0}^{n-1}c_{m}\langle e_{0},e_{n-m}\rangle \right)c_{n}^{*}+c_{n}\left(\sum_{m=0}^{n-1}c_{m}\langle e_{0},e_{n-m}\rangle \right)^{*}=$$
$$I-\sum_{m=1}^{n-1}c_{m}c_{m}^{*}+c_{n}c_{n}^{*}-c_{n}c_{n}^{*}-c_{n}c_{n}^{*}.$$

\end{proof}

The following corollary establishes that when $A$ is commutative, a stationary sequence of unit vectors (with respect to the inner product) that generate a Hilbert 
$A$-module is effective if and only if each fiber of the inner product space is the sequence of Fourier coefficients of either a singular measure or the Lebesgue measure. In particular, it suffices for each fiber of the inner product space to coincide with that of a singular or Lebesgue measure to ensure uniform recovery via the Kaczmarz algorithm. This characterization follows from the Gelfand–Naimark theorem, which identifies any commutative unital $C^{*}$-algebra with the algebra of continuous functions on a compact Hausdorff space, together with Dini’s theorem to ensure uniform recovery.
\begin{cor}\label{stationarycomm}
Suppose the hypothesis of Theorem \ref{stationarythm} and that $A$ is commutative. Let $\phi: A\to C(X)$ be the isomorphism via the Gelfand-Naimark theorem. Then for every $x\in X$, $\{\phi \langle e_{0},e_{n}\rangle(x)\}$ are the non-negative Fourier coefficients of a Borel probability measure on $[0,1)$. Furthermore, the following are equivalent:
\begin{enumerate}
    \item $\{e_{n}\}_{n=0}^{\infty}$ is effective.
    \item For each $x\in X$, $\{\phi \langle e_{0},e_{n}\rangle(x)\}$ are the non-negative Fourier coefficients of a singular measure or Lebesgue measure.
\end{enumerate}
In particular, if for each $x\in X$,
$\sum_{n=0}^{\infty}\phi c_{n}(x)z^{n}$ is an inner function, then $\{e_{n}\}_{n=0}^{\infty}$ is effective.
\end{cor}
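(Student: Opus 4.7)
The plan is to transport everything to a fiberwise statement via Gelfand--Naimark and combine Theorem \ref{stationarythm} with Sarason's theorem, using Dini's theorem to pass between pointwise and norm identities in $A = C(X)$. First, to produce the fiber measures $\mu_x$: the stationary hypothesis together with the fact that $T$ is inner-product preserving (see the remark following the definition of stationary) makes the $A$-valued Gram matrix $(\langle e_i, e_j\rangle)$ Toeplitz, with $\langle e_i, e_j\rangle = \langle e_{i-j}, e_0\rangle$ for $i \geq j$. Its positivity in $A$ passes through $\phi$ and evaluation at $x$ to scalar positive-definiteness of $\{\phi(\langle e_0, e_n\rangle)(x)\}_n$; Herglotz's theorem then yields a Borel probability measure $\mu_x$ on $[0,1)$ with exactly these non-negative Fourier coefficients (probability because $\phi(\langle e_0, e_0\rangle)(x) = 1$).

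For the equivalence (1) $\Leftrightarrow$ (2), I would apply $\phi$ to the effectivity criterion of Theorem \ref{stationarythm}. Commutativity of $A$ turns $c_n c_n^*$ into $|\phi(c_n)(x)|^2$ pointwise, and since $\phi$ is a ring homomorphism, $\{\phi(c_n)(x)\}_n$ is precisely the power-series inverse of $\sum_n \phi(\langle e_0, e_n\rangle)(x)\,z^n$ in $\mathbb{C}[[z]]$. Evaluating the criterion of Theorem \ref{stationarythm} at $x$ yields, for every $k \geq 1$,
$$\big|\phi(\langle e_0, e_k\rangle)(x)\big|^2\left(\sum_{n=1}^\infty |\phi(c_n)(x)|^2 - 1\right) = 0.$$
Ranging over $k \geq 1$, for each fixed $x$ this forces a dichotomy: either $\phi(\langle e_0, e_k\rangle)(x) = 0$ for every $k \geq 1$, so $\mu_x$ is Lebesgue measure; or $\sum_{n=1}^\infty |\phi(c_n)(x)|^2 = 1$, so $\mu_x$ is singular by Sarason's theorem. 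This directly yields (1) $\Rightarrow$ (2) as well as the final \emph{in particular} clause, since an inner function forces a singular measure by Sarason.

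For the converse (2) $\Rightarrow$ (1), the pointwise identity holds at every $x$ by hypothesis, but must be lifted to an identity in the norm of $A = C(X)$ before Theorem \ref{stationarythm} can be applied. This is where Dini enters: the partial sums $T_N := \langle e_k, e_0\rangle \sum_{n=1}^N c_n c_n^* \langle e_0, e_k\rangle$ are monotone increasing in $A$ (each summand is positive) and dominated by the continuous function $\langle e_k, e_0\rangle \langle e_0, e_k\rangle$, via the positivity bound $\sum_{n=1}^N c_n c_n^* \leq I$ extracted from the end of the proof of Theorem \ref{stationarythm}. Under (2) the $T_N$ converge pointwise in $x$ to this dominating continuous function, so Dini's theorem on the compact space $X$ upgrades the convergence to uniform convergence, producing the required $A$-valued identity. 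The main subtlety is this last step: one needs to know that the candidate limit is continuous so that Dini genuinely applies, which is precisely what the dichotomy above supplies, together with the observation that $C^*$-monotonicity of the partial sums in $C(X)$ is exactly pointwise monotonicity.
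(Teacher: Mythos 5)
Your proposal is correct and follows essentially the same route as the paper: Gelfand--Naimark plus the Toeplitz/positive-definite structure to produce the fiber measures (the paper invokes Bochner where you invoke Herglotz, which is the same result for positive-definite sequences), the fiberwise dichotomy from Theorem \ref{stationarythm} combined with Sarason's theorem for the equivalence, and Dini's theorem to upgrade pointwise to uniform convergence for the converse. Your added justification that Dini applies---monotonicity of the partial sums from the bound $\sum_{n=1}^{N} c_n c_n^* \leq I$ and continuity of the limit $|\phi\langle e_k, e_0\rangle|^2$---is a detail the paper leaves implicit, but it is the same argument.
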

\begin{proof}
For any $x\in X$, one can easily check that by the stationary condition on $\{e_{n}\}_{n=0}^{\infty}$ that $\psi_{x}:\mathbb{Z}\to \mathbb{C}$ where $$\psi_{x}(n)=\phi\langle e_{k},e_{k+n}\rangle(x)$$ is a positive definite function that maps $0$ to $1$ where $k\geq max\{0,-n\}$. Then by Bochner's theorem, for every $x\in X$, $\{\phi \langle e_{0},e_{n}\rangle(x)\}$ are the non-negative Fourier coefficients of a Borel probability measure on $[0,1)$. 

Furthermore, it is clear that for each $x\in X$, $\sum_{n=0}^{\infty}\phi \langle e_{0},e_{n}\rangle(x)z^{n}$ and $\sum_{n=0}^{\infty}\phi c_{n}(x)z^{n}$ are inverses in the ring $\mathbb{C}[[z]]$ by the definition of the $\{c_{n}\}_{n=0}^{\infty}$ from Theorem \ref{stationarythm}.

Now suppose that $\{e_{n}\}_{n=0}^{\infty}$ is effective. Then by Theorem \ref{stationarythm} for each $x\in X$ where for some $n>0$, $$\phi \langle e_{0},e_{n}\rangle(x)\neq 0,$$
$$\sum_{n=1}^{\infty}|\phi c_{n}(x)|^{2}=1.$$ By Sarason, such $x$ have $\{\phi \langle e_{0},e_{n}\rangle(x)\}$ being the non-negative Fourier coefficients of a singular Borel probability measure on $[0,1)$. For $x$ otherwise, clearly $\{\phi \langle e_{0},e_{n}\rangle(x)\}$ are the non-negative Fourier coefficients of Lebesgue measure.

Now suppose that for each $x$, $\{\phi \langle e_{0},e_{n}\rangle(x)\}$ are the non-negative Fourier coefficients of a singular measure or Lebsgue measure. It follows that for all $k>0$,
$$|\phi \langle e_{k}, e_{0}\rangle|^{2} \sum_{n=1}^{\infty}|\phi c_{n}|^{2} =|\phi \langle e_{k}, e_{0}\rangle|^{2}$$ point-wise.
This convergence is uniform in $X$, by Dini's theorem. Therefore, $\{e_{n}\}_{n=0}^{\infty}$ is effective by Theorem \ref{stationarythm}.
\end{proof}

The following application of Corollary \ref{stationarycomm} shows that a continuous family of elements in a Hilbert space can be recovered uniformly by an effective stationary sequence:

\begin{cor}\label{uniformkacz}
Let $H'$ be a Hilbert space and $X$ be a compact Hausdorff space. Then $H=C(X,H')$ is a Hilbert $C(X)$-module. If $\{e_{n}\}_{n=0}^{\infty}\subseteq H'$ is a stationary sequence of complete unit vectors where $\{\langle e_{0}, e_{n}\rangle\}_{n=0}^{\infty}$ are the non-negative Fourier coefficients of a singular Borel probability measure, then $\{e_{n}\}_{n=0}^{\infty}\subseteq H$ is effective where $\{e_{n}\}_{n=0}^{\infty}\subseteq H$ denotes elements of $C(X,H')$ that are constant in $X$.
\end{cor}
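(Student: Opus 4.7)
The plan is to reduce the statement directly to Corollary \ref{stationarycomm} by verifying its hypotheses in the Hilbert $C(X)$-module $H = C(X,H')$ equipped with the natural inner product $\langle f,g\rangle(x) := \langle f(x), g(x)\rangle_{H'}$, which induces the norm $\|f\| = \sup_{x \in X}\|f(x)\|_{H'}$ and makes $H$ into a self-dual Hilbert $C(X)$-module. Treat each $e_n \in H'$ as the constant function $x \mapsto e_n$ in $H$. Then $\langle e_0,e_0\rangle$ is the constant function $1$, which is the identity of $C(X)$, so the $A$-valued normalization required by Corollary \ref{stationarycomm} holds automatically.

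Next, I would verify stationarity in $H$. If $T$ is the $\mathbb{C}$-linear isometry on $H'$ producing $T^n e_0 = e_n$, define $\widetilde{T} : H \to H$ pointwise by $(\widetilde{T}f)(x) = Tf(x)$. A one-line computation shows $\langle \widetilde{T}f,\widetilde{T}g\rangle(x) = \langle f(x),g(x)\rangle_{H'} = \langle f,g\rangle(x)$, so $\widetilde{T}$ is a $C(X)$-linear isometry (inner-product preserving), and $\widetilde{T}^n e_0 = e_n$ as required.

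The only nonroutine step is showing that the constant functions $\{e_n\}_{n=0}^{\infty}$ generate $H$ as a $C(X)$-module, i.e.\ that finite sums of the form $\sum_{n} \phi_n(x) e_n$ with $\phi_n \in C(X)$ are dense in $C(X,H')$. Given $f \in C(X,H')$ and $\varepsilon > 0$, the image $f(X) \subseteq H'$ is compact, so it admits a finite $\varepsilon$-net $\{v_1,\ldots,v_m\}$. Pulling back the cover $\{B(v_i,\varepsilon)\}$ through $f$ and choosing a subordinate partition of unity $\{\psi_i\} \subseteq C(X)$ yields
\[
\Bigl\|f(x) - \sum_{i=1}^m \psi_i(x)\, v_i\Bigr\|_{H'} \le \sum_i \psi_i(x)\, \|f(x) - v_i\|_{H'} < \varepsilon
\]
uniformly in $x$. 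Since $\{e_n\}$ is complete in $H'$, each $v_i$ is a scalar limit of finite combinations of $\{e_n\}$, and combining these approximations gives the desired $C(X)$-linear combination. This partition-of-unity step is the main (though standard) obstacle.

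With the hypotheses of Corollary \ref{stationarycomm} verified, the conclusion is immediate: under the Gelfand identification $\phi: C(X) \to C(X)$ (the identity here), $\phi\langle e_0,e_n\rangle(x)$ is the constant $\langle e_0,e_n\rangle_{H'}$, which by assumption is the $n$-th non-negative Fourier coefficient of a singular Borel probability measure for every $x \in X$. Condition (2) of Corollary \ref{stationarycomm} therefore holds uniformly, and hence $\{e_n\}_{n=0}^{\infty}$ is effective in $H$.
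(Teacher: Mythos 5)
Your proposal is correct and follows essentially the same route as the paper: the only nontrivial step in both is showing that the constant functions $\{e_{n}\}$ generate $C(X,H')$ as a $C(X)$-module via a finite $\varepsilon$-net of the compact image $f(X)$ and a subordinate partition of unity, after which effectivity is immediate from Corollary \ref{stationarycomm} since the fibers $\langle e_{0},e_{n}\rangle(x)$ are constant in $x$ and equal to the Fourier coefficients of a singular measure. One small caveat: your parenthetical claim that $C(X,H')$ is self-dual is not needed anywhere and is not true in general for infinite-dimensional $H'$, so it should simply be dropped.
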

\begin{proof}
We show that $\{e_{n}\}_{n=0}^{\infty}$ generates $H$. Let $f(x)\in H$ and $\epsilon>0$. By continuity of $f$ and compactness of $X$, there is finitely many $x_{k}\in X$ s.t. $f(X)\subseteq \bigcup_{k}B(f(x_{k}),\epsilon)$. Therefore, $\left\{\bigcup_{k}f^{-1}B(f(x_{k}),\epsilon) \right\}_{k=0}^{m}$ is a finite open cover of $X$. Since $X$ is normal, there is a partition of unity $\{\phi_{k}\}$ dominated by $\left\{\bigcup_{k}f^{-1}B(f(x_{k}),\epsilon) \right\}_{k=0}^{m}$. Now for each $k$, let $\psi_{k}\in span\{e_{n}\}$ such that
$$\|\psi_{k}-f(x_{k})\|_{H'}<\epsilon.$$
Additionally, for any $x\in X$, let $k_{j}\in \{0,\dots, m\}$ be such that $\phi_{k_{j}}(x)\neq 0$. Then $$\|f(x)-\psi_{k_{j}}\|_{H'}<2\epsilon$$ for all $j$.
Then we have
$$\left\|\sum_{k=0}^{m}\phi_{k}(x)\psi_{k}-f(x) \right\|_{H'}=\left\|\sum_{j=0}^{r}\phi_{k_{j}}(x)\psi_{k_{j}}-\sum_{j=0}^{r}\phi_{k_{j}}(x)f(x) \right\|_{H'}< 2\epsilon.$$
\end{proof}

Now for a more interesting example, the following space is constructed via continuous fields of Hilbert spaces, which are defined by Douglas and Paulsen in \cite{Douglas1989Hilbert}.

\begin{defn}
Let $X$ be a compact Hausdorff space and $\{\mu_{x}\}_{x\in X}$ be a weakly continuous family of Borel probability measures on $[0,1)$. That is, for any $g\in C_{b}([0,1))$, $x\to \int_{0}^{1}gd\mu_{x}$ is continuous.
Define the inner product on $$V=span_{C(X)}\{e^{2\pi i ny}\}_{n=0}^{\infty}\subseteq \left\{f:X \to \bigcup L^{2}(\mu_{x}) \ . \ \forall x\ f(x)\in L^{2}(\mu_{x}) \right\}$$ as follows:
$$\left\langle \sum_{n}c_{n}e^{2\pi i ny},\sum_{n}d_{n}e^{2\pi i ny} \right\rangle:=\sum_{n}\sum_{k}c_{n}\overline{d_{k}}\langle e^{2\pi i ny}, e^{2\pi i ky}\rangle_{\mu_{x}}. $$
Denote the norm completion of $V$ by $WC(X, L^{2}(\mu_{x}))$. Then $WC(X, L^{2}(\mu_{x}))$ is a Hilbert $C(X)$ module.    
\end{defn}

\begin{cor}\label{contcor}
 $\{e^{2\pi i ny}\}_{n=0}^{\infty}\subseteq WC(X, L^{2}(\mu_{x}))$ is effective if and only if for every $x\in X$, $\mu_{x}$ is singular or Lebesgue measure.
\end{cor}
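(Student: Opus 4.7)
The plan is to recognize this corollary as a direct application of Corollary \ref{stationarycomm} to the Hilbert $C(X)$-module $WC(X, L^{2}(\mu_{x}))$ with the sequence $e_{n} := e^{2\pi i n y}$. To set this up, I first want to verify the three hypotheses of Corollary \ref{stationarycomm}: (i) $\langle e_{0}, e_{0} \rangle = I$, which follows because $\langle e^{0}, e^{0} \rangle_{\mu_{x}} = \mu_{x}([0,1)) = 1$ gives the constant function $1 \in C(X)$; (ii) $\{e^{2\pi i n y}\}_{n=0}^{\infty}$ generates $WC(X, L^{2}(\mu_{x}))$ by the very definition of the module as a completion of $\mathrm{span}_{C(X)}\{e^{2\pi i n y}\}$; (iii) the sequence is stationary, with isometry $T : f \mapsto e^{2\pi i y} \cdot f$, which is $C(X)$-linear and preserves the inner product since multiplication by $e^{2\pi i y}$ is an isometry on each fiber $L^{2}(\mu_{x})$.

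Next I would identify the fibers $\phi \langle e_{0}, e_{n} \rangle(x)$ with Fourier coefficients. By construction,
\[
\phi \langle e_{0}, e_{n} \rangle(x) = \langle e^{0}, e^{2\pi i n y} \rangle_{\mu_{x}} = \int_{0}^{1} e^{-2\pi i n y} \, d\mu_{x}(y) = \widehat{\mu_{x}}(n),
\]
the $n$-th (non-negative) Fourier coefficient of $\mu_{x}$. The weak continuity of $x \mapsto \mu_{x}$ ensures that each $x \mapsto \widehat{\mu_{x}}(n)$ lies in $C(X)$, which is what makes the $C(X)$-valued inner product well-defined in the first place. Since Fourier coefficients determine a Borel probability measure on $[0,1)$ uniquely, the condition ``$\{\widehat{\mu_{x}}(n)\}_{n=0}^{\infty}$ are the non-negative Fourier coefficients of a singular measure or Lebesgue measure'' is precisely the condition that $\mu_{x}$ itself is singular or Lebesgue.

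Finally, I would invoke Corollary \ref{stationarycomm}: the sequence $\{e^{2\pi i n y}\}_{n=0}^{\infty}$ in $WC(X, L^{2}(\mu_{x}))$ is effective if and only if for each $x \in X$, $\{\phi \langle e_{0}, e_{n} \rangle(x)\}_{n=0}^{\infty}$ are the Fourier coefficients of a singular or Lebesgue measure, which by the identification above is equivalent to $\mu_{x}$ being singular or Lebesgue for every $x \in X$.

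There is no serious obstacle here; the proof is essentially bookkeeping once Corollary \ref{stationarycomm} is in hand. The only point requiring a moment of care is verifying that $T$ is a well-defined $C(X)$-linear isometry on all of $WC(X, L^{2}(\mu_{x}))$ rather than just on the dense subspace $V$, which follows from its fiberwise isometry property and the definition of the norm on $V$ by extension.
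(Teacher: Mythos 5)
Your proposal is correct and matches the paper's intent exactly: the paper states this corollary without proof precisely because it is the direct application of Corollary \ref{stationarycomm} that you describe, with the fibers $\phi\langle e_{0},e_{n}\rangle(x)$ identified as the non-negative Fourier coefficients $\widehat{\mu_{x}}(n)$ and stationarity witnessed by multiplication by $e^{2\pi i y}$. Your added care in checking the hypotheses (unit vector, generation by construction of $WC(X,L^{2}(\mu_{x}))$, and extension of the isometry from the dense subspace $V$) is exactly the bookkeeping the paper leaves implicit.
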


\begin{ex}
Suppose \( \mu \) is a fixed singular Borel probability measure on \([0,1)\), and let $$\mu_x = g(x)\mu,$$ where 
\( g \in C\big(X, C_b([0,1))\big) \) and \( g(x)(y) \geq 0 \) for all \( x \in X \) and \( y \in [0,1) \), with the normalization condition
\[
\int g(x)(y)\, d\mu(y) = 1
\]
for each \( x \in X \). Then \( \{ \mu_x \}_{x \in X} \) forms a weakly continuous family of singular probability measures. One can also check that the standard module \( C(X, L^2(\mu)) \) embeds into \( WC(X, L^2(\mu_x)) \). Consequently, any continuous family of functions in \( L^2(\mu) \) can be uniformly recovered in the varying Hilbert spaces \( L^2(\mu_x) \) via the Kaczmarz algorithm.
\end{ex}

\section{A normalized Cauchy transform for continuous families of measures}\label{S5}
Our goal for this section is to find the image of the following operator:
\begin{defn}
Let $V_{\mu_{\{x\}}}: WC(X, L^{2}(\mu_{x}))\to C(X, H^{2}(\mathbb{D}))$ where
$$V_{\mu_{\{x\}}}= \sum_{n=0}^{\infty}\langle f, g_{n}\rangle w^{n}$$
and $\{g_{n}\}$ is the auxiliary sequence of $\{e^{2\pi i ny}\}_{n=0}^{\infty}\subseteq WC(X, L^{2}(\mu_{x}))$ where for every $x\in X$, $\mu_{x}$ is singular or Lebesgue measure.   
\end{defn}
We will attempt to convince the reader that $V_{\mu_{\{x\}}}$ is the normalized Cauchy transform of a continuous family of measures.
We recall the classical normalized Cauchy transform and the Herglotz representation theorem here:
\begin{thm*}[Herglotz]
There is a one-to-one correspondence between Borel probability measures on $[0,1)$ $\mu$, and analytic functions $b(w)$ bounded by one on $\mathbb{D}$ such that $b(0)=0$ where
$$\frac{1+b(w)}{1-b(w)}=\int_{0}^{1}\frac{1+we^{-2\pi i x}}{1-we^{-2\pi i x}}d\mu.$$

In particular, there is a one-to-one correspondence between singular Borel probability measures on $[0,1)$ $\mu$, and inner functions $b(w)$ such that $b(0)=0$.
\end{thm*}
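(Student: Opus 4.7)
The plan is to prove the bijection by constructing both directions explicitly, using the Herglotz--Riesz representation of positive harmonic functions on the disk as the key input, and then passing to the singular case via a standard boundary-value argument for Cayley transforms. Going from $\mu$ to $b$: given a Borel probability measure $\mu$ on $[0,1)$, set
\[
F(w) = \int_0^1 \frac{1+we^{-2\pi ix}}{1-we^{-2\pi ix}}\,d\mu(x).
\]
The Herglotz kernel is analytic in $w \in \mathbb{D}$ with strictly positive real part (one computes $\operatorname{Re}\tfrac{1+w\zeta}{1-w\zeta} = \tfrac{1-|w|^2}{|1-w\zeta|^2}$, the Poisson kernel), so $F$ is analytic on $\mathbb{D}$ with $\operatorname{Re} F > 0$ and $F(0) = \mu([0,1)) = 1$. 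Then $b := (F-1)/(F+1)$ is analytic with $b(0) = 0$, and $|b| < 1$ on $\mathbb{D}$ by the standard Cayley inequality $|F-1| < |F+1|$ whenever $\operatorname{Re} F > 0$.

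Conversely, given analytic $b : \mathbb{D} \to \overline{\mathbb{D}}$ with $b(0)=0$, the maximum principle forces $|b|<1$ on $\mathbb{D}$, so $F := (1+b)/(1-b)$ is analytic on $\mathbb{D}$ with $F(0)=1$ and $\operatorname{Re} F \geq 0$. The Herglotz--Riesz theorem for positive harmonic functions on the disk then yields a unique positive finite Borel measure $\mu$ on $[0,1)$ whose Poisson integral equals $\operatorname{Re} F$; the total mass is forced to be $F(0)=1$. One recovers the full integral formula either by matching Taylor coefficients of $F$ at $0$ against the geometric-series expansion of the Herglotz kernel, or by reconstructing the imaginary part of $F$ from its real part up to the real constant determined by $F(0) \in \mathbb{R}$. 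Uniqueness in the Herglotz representation makes the two maps mutually inverse.

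For the singular-measure/inner-function refinement, I would use the identity
\[
1 - |b(w)|^2 = \frac{4\operatorname{Re} F(w)}{|1+F(w)|^2}
\]
on $\mathbb{D}$, together with Fatou's theorem: the non-tangential boundary values of $\operatorname{Re} F$ coincide almost everywhere with the Radon--Nikodym derivative $d\mu_{ac}/dm$ with respect to Lebesgue measure $m$ on $\mathbb{T}$. Hence $\mu$ is singular iff $\operatorname{Re} F \to 0$ non-tangentially a.e.\ iff $|b|=1$ a.e.\ on $\mathbb{T}$ iff $b$ is inner. The deepest steps are the classical Herglotz representation of positive harmonic functions (for the bijection) and Fatou's theorem combined with the Lebesgue decomposition of boundary measures (for the singular/inner dichotomy); once these are granted, the remainder of the argument is algebraic manipulation of the Cayley transform $F \mapsto (F-1)/(F+1)$, so the main conceptual obstacle is really packaged inside those two classical facts.
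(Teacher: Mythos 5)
The paper does not prove this statement: it is recalled as the classical Herglotz representation theorem (with the singular/inner refinement due to the standard Fatou-type boundary analysis, as in Clark and Sarason), so there is no internal proof to compare against. Your proposal is the standard and correct argument: the Cayley transform $b=(F-1)/(F+1)$ together with the Herglotz--Riesz representation of positive harmonic functions gives the bijection, and the identity $1-|b|^2=4\operatorname{Re}F/|1+F|^2$ combined with Fatou's theorem (nontangential limits of the Poisson integral recover $d\mu_{ac}/dm$ a.e.) gives the singular/inner dichotomy. The only step worth making explicit is the direction ``$b$ inner $\Rightarrow\mu$ singular'': there you pass from $|b^*|=1$ a.e.\ to $\operatorname{Re}F\to 0$ a.e., which requires knowing that $b^*\neq 1$ a.e.; this holds because $1-b$ is a bounded analytic function that is not identically zero, so $\log|1-b^*|\in L^1(\mathbb{T})$. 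With that remark your sketch is complete.
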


\begin{defn}
Let $\mu$ be a finite Borel measure on $[0,1)$. Recall that the \textbf{Cauchy transform}, $C_{\mu}$, is a map from $L^{1}(\mu)$ to analytic functions on the disk such that
$$[C_{\mu}(f)](w)=\int_{0}^{1}\frac{f(x)d\mu}{1-we^{-2\pi i x}}.$$
Now specifically for $\mu$ that is a singular Borel probability measure on $[0,1)$ (or Lebesgue measure), let $b(w)$ be the corresponding function of $\mu$ via the Herglotz Theorem.
Define $V_{\mu}: L^{2}(\mu)^{+}\to [b(w)H^{2}(\mathbb{D})]^{\perp}$
by \begin{equation}V_{\mu}(f)=\frac{C_{\mu}(f)}{C_{\mu}(1)} = \int_{0}^{1} \dfrac{f(x) d\mu}{(1 - (\cdot)e^{-2\pi i x}) [C_{\mu}(1)](\cdot)},\end{equation}
which is a unitary operator called the \textbf{normalized Cauchy transform} \cite{Sarason1994Sub-Hardy, Clark1972One}.
\end{defn}

\begin{lem*}[Herr and Weber \cite{Herr2017Fourier}]
Let $\mu$ be a Borel probability measure on $[0,1)$ and $b(w)$ be the corresponding inner function via the Herglotz theorem. Then
\begin{equation}{\label{inner}}
b(w)=1-\frac{1}{[C_{\mu}(1)](w)}.
\end{equation}
\end{lem*}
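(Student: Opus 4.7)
The plan is to prove the identity by a direct algebraic manipulation of the Herglotz representation, rewriting its integrand in terms of the Cauchy transform of the constant function $1$.

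First I would use the partial fraction type identity
\[
\frac{1+we^{-2\pi i x}}{1-we^{-2\pi i x}} \;=\; \frac{2}{1-we^{-2\pi i x}} - 1,
\]
which is valid for $w \in \mathbb{D}$ and $x \in [0,1)$. Integrating both sides against $d\mu$ and using that $\mu$ is a probability measure (so that $\int_0^1 1\,d\mu = 1$) gives
\[
\int_0^1 \frac{1+we^{-2\pi i x}}{1-we^{-2\pi i x}}\,d\mu \;=\; 2[C_{\mu}(1)](w) - 1.
\]
By the Herglotz representation, the left-hand side equals $\frac{1+b(w)}{1-b(w)}$.

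Next I would solve the resulting equation
\[
\frac{1+b(w)}{1-b(w)} \;=\; 2[C_{\mu}(1)](w) - 1
\]
for $[C_{\mu}(1)](w)$. Adding $1$ to both sides and combining fractions yields
\[
2[C_{\mu}(1)](w) \;=\; 1 + \frac{1+b(w)}{1-b(w)} \;=\; \frac{2}{1-b(w)},
\]
so that $[C_{\mu}(1)](w) = \frac{1}{1-b(w)}$. Rearranging this gives precisely
\[
b(w) \;=\; 1 - \frac{1}{[C_{\mu}(1)](w)},
\]
which is the claimed identity \eqref{inner}.

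The only subtle point, hardly an obstacle, is verifying the well-definedness of the reciprocal: since $b$ is bounded by one on $\mathbb{D}$ and, in the non-trivial case, $b(0)=0$, the function $1-b(w)$ is non-vanishing on $\mathbb{D}$ (by the maximum principle, $|b(w)|<1$ there), so $[C_{\mu}(1)](w)=1/(1-b(w))$ is a well-defined, non-vanishing analytic function on $\mathbb{D}$ and the division in \eqref{inner} is justified. No deeper analytic input is required beyond the Herglotz representation itself and the fact that $\mu$ has total mass one.
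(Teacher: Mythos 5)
Your derivation is correct: the partial-fraction identity, the use of $\mu([0,1))=1$, and the algebraic inversion of the Herglotz relation are exactly the standard route to \eqref{inner}, and your remark on the non-vanishing of $1-b(w)$ on $\mathbb{D}$ properly justifies the division. The paper itself states this lemma as a cited result of Herr and Weber without reproving it, so there is no internal proof to compare against, but your argument is the expected one and is complete.
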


The following result establishes the connection between the normalized Cauchy transform and auxiliary sequences from the Kaczmarz algorithm:

\begin{thm*}[Herr and Weber]
Let $\mu$ be a singular Borel probability measure on $[0,1)$ with auxiliary sequence $\{g_{n}\}_{n=0}^{\infty}$ of $\{e^{2\pi i nx}\}_{n=0}^{\infty}\subseteq L^{2}(\mu)$.
Then \begin{equation}{\label{NCT}}[V_{\mu}(f)](w)=\sum_{n=0}^{\infty}\langle f,g_{n}\rangle w^{n}.\end{equation}
\end{thm*}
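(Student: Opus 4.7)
The plan is to compute $V_\mu(f)$ directly from its integral definition by expanding the Cauchy kernel as a geometric series in $w$, convert the resulting quotient into a product using the Herr--Weber identity \eqref{inner}, and then identify the product's power-series coefficients with $\langle f, g_n\rangle$ via the explicit formula for the auxiliary sequence derived in the proof of Theorem \ref{stationarythm}.

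First, I would expand $\frac{1}{1-we^{-2\pi i x}} = \sum_{n=0}^{\infty} w^n e^{-2\pi i n x}$, which converges uniformly in $x \in [0,1)$ for each fixed $w \in \mathbb{D}$. Integrating against $f\,d\mu$ and interchanging sum and integral yields $[C_\mu(f)](w) = \sum_{n=0}^{\infty} \langle f, e^{2\pi i n x}\rangle_{\mu}\, w^n$. Specializing to $f \equiv 1$, the function $[C_\mu(1)](w) = \sum_{n=0}^{\infty} \langle e_0, e_n\rangle\, w^n$ takes the value $1$ at the origin, so its reciprocal is analytic at $0$ and its Taylor coefficients are the unique formal inverse $\{c_n\}$ appearing in Theorem \ref{stationarythm}. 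The Herr--Weber lemma \eqref{inner} identifies this reciprocal as $1 - b(w)$, giving $\sum_{n=0}^\infty c_n w^n = 1 - b(w)$.

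Next, I would multiply: $V_\mu(f)(w) = [C_\mu(f)](w)\cdot(1-b(w)) = \bigl(\sum_n \langle f, e_n\rangle w^n\bigr)\bigl(\sum_n c_n w^n\bigr)$, whose $n$-th Cauchy-product coefficient is $\sum_{k=0}^n c_{n-k}\langle f, e_k\rangle$. The proof of Theorem \ref{stationarythm} establishes the explicit formula $g_n = \sum_{k=0}^n \overline{c_{n-k}}\, e_k$ (the $C^*$-adjoint specializing to complex conjugation in the scalar case $A = \mathbb{C}$), and conjugate-linearity of the inner product in the second slot immediately yields $\langle f, g_n\rangle = \sum_{k=0}^n c_{n-k}\langle f, e_k\rangle$, matching the Cauchy-product coefficient term by term.

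The main subtlety is justifying that both sides of \eqref{NCT} are genuine elements of $H^2(\mathbb{D})$, so that matching Taylor coefficients actually proves equality of functions rather than merely a formal identity. On the left, Clark's classical theorem ensures $V_\mu : L^2(\mu)^+ \to [bH^2]^\perp \subseteq H^2(\mathbb{D})$ is unitary. On the right, singularity of $\mu$ combined with the Kwapien--Mycielski theorem makes $\{e^{2\pi i n x}\}$ effective, so Theorem \ref{pframecond} makes $\{g_n\}$ a standard Parseval frame, which places $\{\langle f, g_n\rangle\}$ in $\ell^2$ and the corresponding power series in $H^2$; Sarason's $\ell^2$-summability of $\{c_n\}$ likewise makes the Cauchy product of the previous paragraph converge in $H^2$. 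With both sides in $H^2$ and their Taylor coefficients matching, the identity follows.
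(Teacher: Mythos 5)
This statement is imported from Herr and Weber's paper \cite{Herr2017Fourier} and is not proved in the present paper, so there is no internal proof to compare against; your argument is essentially the standard one from that source. The chain of identities checks out: expanding the Cauchy kernel gives $[C_\mu(f)](w)=\sum_n\langle f,e_n\rangle w^n$, the lemma $b=1-1/C_\mu(1)$ identifies $\sum_n c_nw^n$ with $1-b(w)$, the Cauchy product coefficient $\sum_{k=0}^n c_{n-k}\langle f,e_k\rangle$ equals $\langle f,g_n\rangle$ by the formula $g_n=\sum_{k=0}^n\overline{c_{n-k}}\,e_k$ from the proof of Theorem \ref{stationarythm} specialized to $A=\mathbb{C}$, and the convergence issues are handled correctly (indeed, absolute convergence of both factors on $\mathbb{D}$ already makes the pointwise Cauchy-product identity legitimate, so the $H^2$ membership discussion, while true, is more than is strictly needed).
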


Note that $V_{\mu_{\{x\}}}$ is an isometry by Corollary \ref{contcor} and Theorem \ref{pframecond}. Furthermore, by the previous theorem, for $x\in X$, $[V_{\mu_{\{x\}}}(f)](x)$ is the normalized Cauchy transform of $f(x)\in L^{2}(\mu_{x})$. Therefore, $V_{\mu_{\{x\}}}$ represents a normalized Cauchy transform for a continuous family of measures. What follows is our main result for the image of $V_{\mu_{\{x\}}}$, which resembles a continuous version of backward-invariant subspaces of $H^{2}(\mathbb{D})$, see \cite{Beurling1948Two}:

\begin{thm}\label{imageNCT}
$$ImV_{\mu_{\{x\}}}=[b^{x}(w)C(X, H^{2}(\mathbb{D}))]^{\perp}$$ where $\{b^{x}(w)\}_{x\in X}$ is the family of inner functions and zero functions corresponding to $\{\mu_{x}\}_{x\in X}$ via the Herglotz theorem.
\end{thm}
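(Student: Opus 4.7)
The approach is to prove the two containments separately, using a fiberwise reduction to the classical normalized Cauchy transform. The forward containment is a pointwise check; the reverse is the substantive step, requiring approximation of a general element of $[b^x(w) C(X, H^2(\mathbb{D}))]^\perp$ by images under $V_{\mu_{\{x\}}}$.

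For $\operatorname{Im} V_{\mu_{\{x\}}} \subseteq [b^x(w) C(X, H^2(\mathbb{D}))]^\perp$, first observe that the auxiliary sequence $\{g_n\}$ of $\{e^{2\pi i n y}\}_{n=0}^\infty$ computed in $WC(X, L^2(\mu_x))$ restricts at each $x \in X$ to the auxiliary sequence of $\{e^{2\pi i n y}\}$ in $L^2(\mu_x)$, since Definition \ref{auxseq} uses only the fiberwise inner product. Combining this with identity \eqref{NCT} yields the fiberwise identity $[V_{\mu_{\{x\}}}(f)](x) = [V_{\mu_x}(f(x))](w)$. Consequently, for any $g \in C(X, H^2(\mathbb{D}))$ the $C(X)$-valued inner product $\langle V_{\mu_{\{x\}}}(f), b^x(w) g \rangle$ vanishes at each $x$ by the classical Sarason/Clark fact that $V_{\mu_x}$ maps $L^2(\mu_x)^+$ into $[b^x(w) H^2(\mathbb{D})]^\perp$.

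For the reverse inclusion, $V_{\mu_{\{x\}}}$ is an isometry by Theorem \ref{pframecond} together with Corollary \ref{contcor}, so $\operatorname{Im} V_{\mu_{\{x\}}}$ is a closed $C(X)$-submodule of $C(X, H^2(\mathbb{D}))$. Given $h \in [b^x(w) C(X, H^2(\mathbb{D}))]^\perp$, testing against the constant sections $\widetilde{\phi}(x) := \phi$ for $\phi \in H^2(\mathbb{D})$ shows that $h(x) \in [b^x(w) H^2(\mathbb{D})]^\perp$ at every $x \in X$. Since the classical $V_{\mu_x} \colon L^2(\mu_x)^+ \to [b^x(w) H^2(\mathbb{D})]^\perp$ is unitary, scalar linear combinations $\sum_k \alpha_k V_{\mu_x}(e^{2\pi i n_k y})$ are dense in $[b^x(w) H^2(\mathbb{D})]^\perp$ at each $x$. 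I would then globalize this fiberwise density via a partition-of-unity argument in the spirit of Corollary \ref{uniformkacz}: cover $X$ with open sets on which $h$ varies by less than $\varepsilon$ in $H^2(\mathbb{D})$-norm, choose basepoints $x_i$ together with scalar approximants of $h(x_i)$, and glue using a subordinate partition of unity. Each piece of the resulting glued approximant lies in the $C(X)$-linear span of the sections $x \mapsto V_{\mu_x}(e^{2\pi i n y})$, hence in $\operatorname{Im} V_{\mu_{\{x\}}}$.

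The main obstacle is the norm-continuity of $x \mapsto V_{\mu_x}(e^{2\pi i n y})$ into $H^2(\mathbb{D})$, needed for the partition-of-unity gluing to yield a genuine uniform $\varepsilon$-approximation. This should follow by a Dini-type argument: expanding $V_{\mu_x}(e^{2\pi i n y}) = \sum_k \langle e^{2\pi i n y}, g_k\rangle(x) w^k$ via identity \eqref{NCT}, each Taylor coefficient is continuous in $x$ by the weak continuity of $\{\mu_x\}$, while the total norm $\sum_k |\langle e^{2\pi i n y}, g_k\rangle(x)|^2 = 1$ is constant in $x$ (since $V_{\mu_x}$ is an isometry), so pointwise convergence of partial sums upgrades to uniform convergence in $x$, giving the desired $H^2(\mathbb{D})$-norm continuity.
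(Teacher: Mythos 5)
Your argument is correct, and while your forward inclusion coincides with the paper's (the same pointwise computation of the $C(X)$-valued inner product), your proof of the substantive reverse inclusion $[b^{x}(w)C(X,H^{2}(\mathbb{D}))]^{\perp}\subseteq Im V_{\mu_{\{x\}}}$ takes a genuinely different route. The paper constructs the preimage explicitly: writing the given element as $\sum_{n}f_{n}(x)w^{n}$, it takes the fiberwise preimages $g(x)=V_{\mu_{x}}^{-1}\bigl(\sum_{n}f_{n}(x)w^{n}\bigr)\in L^{2}(\mu_{x})^{+}$, records the fiberwise Parseval identity $\sum_{n}|f_{n}(x)|^{2}=\|g(x)\|_{L^{2}(\mu_{x})}^{2}$, upgrades the convergence to be uniform in $x$ by Dini's theorem, and then uses the error identity from the proof of Theorem \ref{pframecond} to show that the partial sums $\sum_{n\leq N}f_{n}e^{2\pi i ny}$ (which lie in the $C(X)$-span of the exponentials) converge in module norm, so that $g$ is a genuine element of $WC(X,L^{2}(\mu_{x}))$ mapping onto the given element. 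You never build a preimage: you approximate the target directly in $C(X,H^{2}(\mathbb{D}))$ by $C(X)$-combinations of the sections $x\mapsto V_{\mu_{x}}(e^{2\pi i ny})$, using fiberwise density of the classical normalized Cauchy transforms plus a partition of unity, and then invoke closedness of the range of the isometry. Both are sound; the paper's version identifies the preimage concretely, while yours trades that for a softer approximation argument together with the (true, but worth stating) fact that an isometry out of a complete module has closed range. Two minor points. First, the norm-continuity you flag as the main obstacle is automatic: the sections $x\mapsto V_{\mu_{x}}(e^{2\pi i ny})$ are exactly $V_{\mu_{\{x\}}}(e^{2\pi i ny})$, and the standard Parseval frame property of $\{g_{n}\}$ (Theorem \ref{pframecond} with Corollary \ref{contcor}) already gives convergence of $\sum_{k}\langle f,g_{k}\rangle\langle g_{k},f\rangle$ in the norm of $C(X)$, i.e.\ uniformly in $x$; your Dini argument is a correct independent verification of the same fact. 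Second, in the gluing step you should choose, for each point, the scalar approximant first and only then a neighborhood on which both $h$ and that particular approximant oscillate by less than $\varepsilon$, before extracting the finite subcover; fixing the cover by the oscillation of $h$ alone does not control the oscillation of the approximants you subsequently pick. With that standard reordering the estimate $\bigl\|\sum_{i}\phi_{i}(x)\Psi_{i}(x)-h(x)\bigr\|\leq\sum_{i}\phi_{i}(x)\cdot 2\varepsilon=2\varepsilon$ goes through and your proof is complete.
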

\begin{proof}
Recall that for each $x\in X$,
$$b^{x}(w)=\sum_{n=1}^{\infty}c_{n}(x)w^{n}=1-\frac{1}{[C_{\mu_{x}}(1)](w)}.$$ Therefore, since $\{\mu_{x}\}$ is a weakly continuous family of measures, for each $n$, $c_{n}\in C(X)$.
Furthermore, by the Herglotz theorem, for any $x\in X$, $$\|b^{x}(w)f(x)\|_{H^{2}(\mathbb{D})}\leq \|f(x)\|_{H^{2}(\mathbb{D})}$$ whenever $f(x)\in C(X, H^{2}(\mathbb{D}))$. Therefore,
$$b^{x}(w)C(X, H^{2}(\mathbb{D}))\subseteq C(X, H^{2}(\mathbb{D}))$$ via point-wise multiplication of elements of $H^{2}(\mathbb{D})$.

Suppose that $\sum_{n=0}^{\infty}f_{n}(x)w^{n}\in [b^{x}(w)C(X, H^{2}(\mathbb{D}))]^{\perp}$. Then for each $x\in X$, $\sum_{n=0}^{\infty}f_{n}(x)w^{n}\in [b^{x}(w)H^{2}(\mathbb{D})]^{\perp}$. Therefore, for all $x\in X$, there is a $g(x)\in L^{2}(\mu_{x})^{+}$ such that
$$\sum_{n=0}^{\infty}f_{n}(x)w^{n}=\sum_{n=0}^{\infty}\langle g(x), g_{n}^{x}\rangle_{\mu_{x}}w^{n}$$ where $\{g_{n}^{x}\}$ is the auxiliary sequence of $\{e^{2\pi i ny}\}_{n=0}^{\infty}\subseteq L^{2}(\mu_{x})$. Now for each $x\in X$,
\begin{equation}\label{NCTfibers}
\sum_{n=0}^{\infty}|\langle g(x), g_{n}^{x}\rangle_{\mu_{x}}|^{2}=\|g(x)\|^{2}_{L^{2}(\mu_{x})}
\end{equation}
since each normalized Cauchy transform is a unitary.
Furthermore, by Dini's theorem, $\sum_{n=0}^{\infty}|f_{n}(x)|^{2}$ converges uniformly in $X$; therefore, equation \ref{NCTfibers} also converges uniformly in $X$. Then by the proof of Theorem \ref{pframecond}, $g: X\to L^{2}(\mu_{x})$ is generated by $\{e^{2\pi i ny}\}_{n=0}^{\infty}\subseteq WC(X, L^{2}(\mu_{x}))$. Therefore, 
$$ImV_{\mu_{\{x\}}}\supseteq[b^{x}(w)C(X, H^{2}(\mathbb{D}))]^{\perp}.$$

Now assume $\sum_{n=0}^{\infty}\langle g(x), g_{n}^{x}\rangle_{\mu_{x}}w^{n}\in ImV_{\mu_{\{x\}}}$ and $\sum_{n=0}^{\infty}f_{n}(x)w^{n}\in C(X, H^{2}(\mathbb{D}))$. Then we have,
$$\left\langle \sum_{n=0}^{\infty}\langle g(x), g_{n}^{x}\rangle_{\mu_{x}}w^{n}, b^{x}(w)\sum_{n=0}^{\infty}f_{n}(x)w^{n} \right\rangle_{C(X, H^{2}(\mathbb{D}))}=0$$ since $\sum_{n=0}^{\infty}\langle g(x), g_{n}^{x}\rangle_{\mu_{x}}w^{n}\in [b^{x}(w)H^{2}(\mathbb{D})]^{\perp}$ for each $x\in X$.
\end{proof}
As a result, every standard frame in a Hilbert $C(X)$-module whose analysis operator has the same image as $V_{\mu_{\{x\}}}$ is similar to the auxiliary sequence associated with the effective sequence $\{e^{2\pi i n y}\}_{n=0}^{\infty}\subseteq WC(X, L^2(\mu_{x})$. We define the analysis operator of a standard frame in a Hilbert $C(X)$-module that will make our analysis convenient:

\begin{defn}
Let $\{f_{n}\}_{n=0}^{\infty}$ be a standard frame in a Hilbert $C(X)$-module $H$ where $X$ is a compact Hausdorff space. Define $\theta_{g_{n}}: H\to C(X, H^{2}(\mathbb{D}))$ where
$$\theta_{g_{n}}f=\sum_{n=0}^{\infty}\langle f, g_{n}\rangle w^{n}.$$
\end{defn}

\begin{cor}\label{equivalenttoauxcor}
Let $\{f_{n}\}_{n=0}^{\infty}$ be a standard frame in a Hilbert $C(X)$-module $H$ where $X$ is a compact Hausdorff space. The following are equivalent:

\begin{enumerate}
    \item $$Im\theta_{f_{n}}=[b^{x}(w)C(X, H^{2}(\mathbb{D}))]^{\perp}$$ where for each $x\in X$, $b^{x}(w)$ is an inner function such that $$b^{x}(0)=0$$ or $$b^{x}(w)=0,$$ and the family of measures corresponding to these functions via the Herglotz theorem $\{\mu_{x}\}_{x\in X}$ are weakly continuous.
    \item There is a bounded, invertible, operator with an invertible adjoint that sends $\{f_{n}\}_{n=0}^{\infty}$ to the auxiliary sequence of effective sequence $\{e^{2\pi i ny}\}_{n=0}^{\infty}\subseteq WC(X, L^{2}(\mu_{x}))$.
\end{enumerate}

\end{cor}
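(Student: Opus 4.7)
The plan is to prove the two implications separately, with (1) $\Rightarrow$ (2) being the substantive direction.

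For (2) $\Rightarrow$ (1), suppose $T: H \to WC(X, L^{2}(\mu_{x}))$ is bounded, adjointable, invertible with invertible adjoint $T^{*}$, and sends $f_{n}$ to $g_{n}$, the auxiliary sequence of $\{e^{2\pi iny}\} \subseteq WC(X, L^{2}(\mu_{x}))$. Then for any $h \in WC(X, L^{2}(\mu_{x}))$,
\[V_{\mu_{\{x\}}}(h) = \sum_{n=0}^{\infty} \langle h, g_{n}\rangle w^{n} = \sum_{n=0}^{\infty} \langle h, T f_{n}\rangle w^{n} = \sum_{n=0}^{\infty} \langle T^{*} h, f_{n}\rangle w^{n} = \theta_{f_{n}}(T^{*} h).\]
Surjectivity of $T^{*}$ yields $\operatorname{Im}\theta_{f_{n}} = \operatorname{Im} V_{\mu_{\{x\}}}$, and by Theorem \ref{imageNCT} this is $[b^{x}(w) C(X, H^{2}(\mathbb{D}))]^{\perp}$.

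For (1) $\Rightarrow$ (2), I would factor through the canonical Parseval frame of $\{f_{n}\}$. The frame operator $S_{f}$ is adjointable, positive, and invertible by the Frank--Larson theorem together with the standard frame bounds, so $C^{*}$-functional calculus produces an adjointable, positive, invertible square root $S_{f}^{-1/2}$. Setting $\tilde f_{n} := S_{f}^{-1/2} f_{n}$ gives the canonical Parseval frame associated to $\{f_{n}\}$. The identity $\theta_{\tilde f_{n}}(h) = \theta_{f_{n}}(S_{f}^{-1/2} h)$ together with bijectivity of $S_{f}^{-1/2}$ shows $\operatorname{Im}\theta_{\tilde f_{n}} = \operatorname{Im}\theta_{f_{n}} =: W$, and by hypothesis and Theorem \ref{imageNCT} this equals $\operatorname{Im} V_{\mu_{\{x\}}}$. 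Since $\{\tilde f_{n}\}$ and $\{g_{n}\}$ are Parseval frames, both $\theta_{\tilde f_{n}}$ and $V_{\mu_{\{x\}}}$ are isometries onto $W$ by Theorem \ref{pframecond}, so $U := V_{\mu_{\{x\}}}^{-1}\theta_{\tilde f_{n}}$ is a well-defined unitary $H \to WC(X, L^{2}(\mu_{x}))$.

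The crux is then to verify $U \tilde f_{m} = g_{m}$. For any standard Parseval frame $\{h_{n}\}$ in a Hilbert $C(X)$-module, the analysis operator $\theta_{h_{n}}$ is adjointable with $\theta_{h_{n}}^{*}(w^{m}) = h_{m}$, and since it is an isometry $\theta_{h_{n}}\theta_{h_{n}}^{*} = P_{\operatorname{Im}\theta_{h_{n}}}$. Applied to both $\{\tilde f_{n}\}$ and $\{g_{n}\}$, whose analysis images both equal $W$, this gives
\[\theta_{\tilde f_{n}}(\tilde f_{m}) = \theta_{\tilde f_{n}}\theta_{\tilde f_{n}}^{*}(w^{m}) = P_{W}(w^{m}) = V_{\mu_{\{x\}}}V_{\mu_{\{x\}}}^{*}(w^{m}) = V_{\mu_{\{x\}}}(g_{m}),\]
so $U\tilde f_{m} = g_{m}$. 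Defining $T := U S_{f}^{-1/2}$ then produces $T f_{m} = g_{m}$, with $T$ bounded, adjointable, invertible, and with invertible adjoint $T^{*} = S_{f}^{-1/2} U^{*}$.

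The main obstacle is establishing the identifications $\theta_{h_{n}}^{*}(w^{m}) = h_{m}$ and $\theta_{h_{n}}\theta_{h_{n}}^{*} = P_{\operatorname{Im}\theta_{h_{n}}}$ in the target $C(X, H^{2}(\mathbb{D}))$. This requires treating $C(X, H^{2}(\mathbb{D}))$ as a Hilbert $C(X)$-module with $\{w^{n}\}$ playing the role of an orthonormal generating set; it is naturally isomorphic to $\ell^{2}(C(X))$ via $w^{n} \leftrightarrow e_{n}$, using uniform convergence of $\sum|a_{n}(x)|^{2}$ by Dini's theorem. Once these identifications are in place, the projection formula says that the ``Gram rows'' $\theta_{h_{n}}(h_{m})$ of a Parseval frame depend only on the analysis image $W$ rather than on the frame itself, which is precisely the mechanism driving the equivalence.
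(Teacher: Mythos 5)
Your proof is correct, but the substantive direction $(1)\Rightarrow(2)$ follows a genuinely different route from the paper's. The paper defines the intertwiner directly by coefficient matching: using Theorem \ref{imageNCT}, each $\theta_{f_{n}}f$ lies in $Im V_{\mu_{\{x\}}}$, so there is a unique $B(f)$ with $\langle f, f_{n}\rangle_{H}=\langle B(f), g_{n}\rangle$ for all $n$; the adjoint is then built by hand on $span_{C(X)}\{g_{n}\}$ via $B^{*}g_{n}=f_{n}$, with well-definedness, boundedness, injectivity and surjectivity each checked separately. You instead pass to the canonical Parseval frame $\tilde f_{n}=S_{f}^{-1/2}f_{n}$ and exploit the identity $\theta_{h_{n}}\theta_{h_{n}}^{*}=P_{Im\theta_{h_{n}}}$ together with $\theta_{h_{n}}^{*}(w^{m})=h_{m}$ to conclude that two standard Parseval frames with the same analysis range are unitarily equivalent, which is the cleaner structural mechanism; it also yields the polar-type factorization $T=US_{f}^{-1/2}$ for free. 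The price is that you must import more machinery: invertibility of the frame operator and existence of $S_{f}^{-1/2}$ via the continuous functional calculus in the adjointable operators (valid here since $\{f_{n}\}$ is a standard frame, so the frame bounds give $aI\leq S_{f}\leq bI$), plus the identification $C(X,H^{2}(\mathbb{D}))\cong \ell^{2}(C(X))$ with $\{w^{n}\}$ as an orthonormal generating set, which you correctly justify by Dini's theorem and which makes the analysis operators adjointable with $\theta^{*}$ the synthesis operator. Your $(2)\Rightarrow(1)$ computation, $V_{\mu_{\{x\}}}=\theta_{f_{n}}\circ T^{*}$ followed by surjectivity of $T^{*}$, is exactly the argument the paper leaves implicit with ``follows easily from Theorem \ref{imageNCT}.'' Both arguments are sound; yours trades the paper's elementary but lengthier verifications for standard Hilbert $C^{*}$-module frame theory.
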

\begin{proof}
$(1)\implies (2)$

By Theorem \ref{imageNCT}, for all $f\in H$, there is a $B(f)\in WC(X, L^{2}(\mu_{x}))$ such that
\begin{equation}\label{ipeq}
\langle f, f_{n}\rangle_{H} =\langle B(f), g_{n}\rangle_{WC(X, L^{2}(\mu_{x}))} 
\end{equation} for all $n$ where $\{g_{n}\}$ is the auxiliary sequence of $\{e^{2\pi i ny}\}\subseteq WC(X, L^{2}(\mu_{x}))$. Also, clearly the function $B: H\to WC(X, L^{2}(\mu_{x}))$ is invertible. Now note that since $\{g_{n}\}$ is a standard Parseval frame and $\{f_{n}\}$ is a stadnard frame that $B$ is $A$-linear and bounded.

Now define $B^{*}: WC(X, L^{2}(\mu_{x}))\to H$ where $B^{*}(g_{n})=f_{n}$ for all $n$ and extend $C(X)$-linearly. To see that $B^{*}$ is well defined,
suppose that
$$\sum_{n=0}^{k}c_{n}f_{n}=0$$ for some $\{c_{n}\}\subseteq C(X)$. By equation \ref{ipeq} since $B$ is surjective,
$$\sum_{n=0}^{k}c_{n}g_{n}=0.$$ Therefore, $B^{*}$ is well defined on $span_{C(X)}\{g_{n}\}$.
Furthermore for any $\{c_{n}\}\subseteq C(X)$ by equation \ref{ipeq},
$$\left \langle \sum_{n=0}^{k}c_{n}f_{n}, \sum_{n=0}^{k}c_{n}f_{n} \right\rangle_{H}=\left\langle B\sum_{n=0}^{k}c_{n}f_{n},\sum_{n=0}^{k}c_{n}g_{n} \right\rangle_{WC(X, L^{2}(\mu_{x}))}.$$
Thus, $B^{*}$ is also bounded on $span_{C(X)}\{g_{n}\}$ since $B$ is bounded, showing $B^{*}$ uniquely extends continuously to $WC(X, L^{2}(\mu_{x}))$.

Now suppose that $$B^{*}f=0=B^{*}\sum_{n=0}^{\infty}\langle f, g_{n}\rangle_{WC(X, L^{2}(\mu_{x}))}g_{n}=\sum_{n=0}^{\infty}\langle f, g_{n}\rangle_{WC(X, L^{2}(\mu_{x}))}f_{n}=$$
$$\sum_{n=0}^{\infty}\langle B^{-1}f, f_{n}\rangle_{H}f_{n}.$$
Since $\{f_{n}\}$ is a frame, we have $f=0$.

Now to see that $B^{*}$ is surjective, note that for any $f\in H$, there is a $\{c_{n}\}\in Im\theta_{f_{n}}$ such that
$$\sum_{n=0}^{\infty}c_{n}f_{n}=f=B^{*}\sum_{n=0}^{\infty}c_{n}g_{n}.$$ The above equality is sound because $\sum_{n=0}^{\infty}c_{n}g_{n}$ converges for any $\{c_{n}\}\in Im\theta_{g_{n}}$ by the Parseval frame condition.

Now it is clear that $B^{*}$ is the adjoint of $B$.

$(2)\implies (1)$

This follows easily from Theorem \ref{imageNCT}.
\end{proof}

Our next result is that every standard Parseval frame in a Hilbert $C(X)$-module whose analysis operator has the same image as $V_{\mu_{\{x\}}}$ arises from an effective Kaczmarz algorithm. This is inspired by the result of Szwarc \cite{Szwarc2007Kaczmarz} about when Parseval frames arise from effective Kaczmarz algorithms in the Hilbert space setting.
\begin{cor}
Let $\{f_{n}\}_{n=0}^{\infty}$ be a standard Parseval frame in a Hilbert $C(X)$-module $H$ where $X$ is a compact Hausdorff space. If $$Im\theta_{f_{n}}=[b^{x}(w)C(X, H^{2}(\mathbb{D}))]^{\perp}$$ where for each $x\in X$, $b^{x}(w)$ is an inner function such that $$b^{x}(0)=0$$ or $$b^{x}(w)=0,$$ and the family of measures corresponding to these functions via the Herglotz theorem $\{\mu_{x}\}_{x\in X}$ are weakly continuous, then $\{f_{n}\}_{n=0}^{\infty}$ is the auxiliary sequence of an effective stationary sequence.
\end{cor}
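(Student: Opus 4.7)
The plan is to apply Corollary~\ref{equivalenttoauxcor} to obtain an intertwining operator between $H$ and $WC(X, L^{2}(\mu_{x}))$, upgrade it from a bounded invertible map to a unitary using the Parseval hypothesis, and then transport the stationary shift structure from $WC(X, L^{2}(\mu_{x}))$ back to $H$ to produce the desired effective stationary sequence.

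First I would use the Herglotz theorem to recover that each $\mu_{x}$ is singular or Lebesgue, so Corollary~\ref{contcor} yields that $\{e^{2\pi i n y}\}_{n=0}^{\infty} \subseteq WC(X, L^{2}(\mu_{x}))$ is effective; by Theorem~\ref{pframecond} its auxiliary sequence $\{g_{n}\}$ is then a standard Parseval frame. Consequently Corollary~\ref{equivalenttoauxcor} provides a bounded invertible $B : H \to WC(X, L^{2}(\mu_{x}))$ with bounded invertible adjoint $B^{*}$ satisfying $B^{*} g_{n} = f_{n}$ and $\langle f, f_{n}\rangle_{H} = \langle Bf, g_{n}\rangle$ for every $f \in H$ and every $n$.

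The crux of the argument, which I expect to be the main obstacle, is to promote $B$ to a unitary. Since both $\{f_{n}\}$ and $\{g_{n}\}$ are standard Parseval frames, for every $f \in H$ one has
\[
\langle Bf, Bf\rangle = \sum_{n=0}^{\infty}\langle Bf, g_{n}\rangle\langle g_{n}, Bf\rangle = \sum_{n=0}^{\infty}\langle f, f_{n}\rangle\langle f_{n}, f\rangle = \langle f, f\rangle.
\]
A standard polarization in the $C^{*}$-module inner product then yields $\langle Bf, Bh\rangle = \langle f, h\rangle$ for all $f, h \in H$, hence $B^{*}B = I$; invertibility of $B$ forces $B^{*} = B^{-1}$. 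This is precisely where the Parseval hypothesis (stronger than the merely standard-frame hypothesis of Corollary~\ref{equivalenttoauxcor}) is essential, and without it the transport step below would fail to produce unit vectors in the inner-product sense or an isometric shift.

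With $B$ unitary, let $T$ be the $C(X)$-linear shift on $WC(X, L^{2}(\mu_{x}))$ determined by $T e^{2\pi i n y} = e^{2\pi i (n+1) y}$ that witnesses stationarity of $\{e^{2\pi i n y}\}$. Setting $\tilde e_{n} := B^{*} e^{2\pi i n y}$ and $\widetilde{T} := B^{*} T B$ produces a $C(X)$-linear isometry of $H$ with $\widetilde{T}^{n} \tilde e_{0} = \tilde e_{n}$ and $\langle \tilde e_{n}, \tilde e_{n}\rangle = I$, so $\{\tilde e_{n}\}$ is a stationary sequence of unit vectors in $H$. A routine induction on the recursion in Definition~\ref{auxseq}, using $C(X)$-linearity of $B^{*}$ together with $\langle \tilde e_{n}, \tilde e_{k}\rangle = \langle e^{2\pi i n y}, e^{2\pi i k y}\rangle$, then shows that the auxiliary sequence of $\{\tilde e_{n}\}$ equals $\{B^{*} g_{n}\} = \{f_{n}\}$. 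Since $\{f_{n}\}$ is by hypothesis a standard Parseval frame, Theorem~\ref{pframecond} delivers effectivity of $\{\tilde e_{n}\}$, completing the proof.
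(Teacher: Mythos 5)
Your proposal is correct and follows essentially the same route as the paper: invoke Corollary \ref{equivalenttoauxcor} to get the intertwiner $B$, use the Parseval hypothesis to upgrade it to a unitary, transport the shift to realize $\{f_{n}\}$ as the auxiliary sequence of the stationary sequence $\{B^{*}e^{2\pi i ny}\}$, and conclude effectivity from Theorem \ref{pframecond}. The paper's proof is just a terser version of this; your polarization argument for $B^{*}B=I$ and the induction on the auxiliary-sequence recursion are exactly the details the paper leaves to the reader.
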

\begin{proof}
Following the proof of $(1)\implies (2)$ in Corollary \ref{equivalenttoauxcor}, since we further assume that $\{f_{n}\}$ is a Parseval frame, it is unitarly equivalent to the auxiliary sequence via some operator $U$. Also, it is easy to check that $\{f_{n}\}$ is the auxiliary sequence of $\{U^{*}e^{2\pi i ny}\}_{n=0}^{\infty}\subseteq H$ using the recursive definition of auxiliary sequences. Finally, it is clear that $\{U^{*}e^{2\pi i ny}\}_{n=0}^{\infty}$ is stationary, and it is effective by Theorem \ref{pframecond}.
\end{proof}
Finally, we conclude by showing that every standard frame in a Hilbert $C(X)$-module whose analysis operator has the same image as $V_{\mu_{\{x\}}}$ is the orbit of an operator, which resembles the results in the Hilbert space setting, see \cite{Christensen2020Frame} for details.
\begin{cor}\label{orbitframeresult}
Let $\{f_{n}\}_{n=0}^{\infty}$ be a standard frame in a Hilbert $C(X)$-module $H$ where $X$ is a compact Hausdorff space. If $$Im\theta_{f_{n}}=[b^{x}(w)C(X, H^{2}(\mathbb{D}))]^{\perp}$$ where for each $x\in X$, $b^{x}(w)$ is an inner function such that $$b^{x}(0)=0$$ or $$b^{x}(w)=0,$$ and the family of measures corresponding to these functions via the Herglotz theorem $\{\mu_{x}\}_{x\in X}$ are weakly continuous, then there is a bounded operator $T:H \to H$ such that 
$$Tf_{n}=f_{n+1}$$ for all $n$.
\end{cor}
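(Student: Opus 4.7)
The plan is to construct $T$ by first building the analogous operator on the model space $WC(X, L^{2}(\mu_{x}))$ and then transporting it to $H$. Direction $(1) \Rightarrow (2)$ of Corollary \ref{equivalenttoauxcor}, applied to $\{f_{n}\}$, yields a bounded invertible $C(X)$-linear operator $B^{*}: WC(X, L^{2}(\mu_{x})) \to H$ with invertible adjoint satisfying $B^{*}g_{n} = f_{n}$ for every $n$, where $\{g_{n}\}$ denotes the auxiliary sequence of $\{e^{2\pi i n y}\}_{n=0}^{\infty} \subseteq WC(X, L^{2}(\mu_{x}))$. Consequently, it suffices to produce a bounded operator $S_{0}: WC(X, L^{2}(\mu_{x})) \to WC(X, L^{2}(\mu_{x}))$ with $S_{0}g_{n} = g_{n+1}$, since then $T := B^{*}S_{0}(B^{*})^{-1}$ is bounded on $H$ and satisfies $Tf_{n} = f_{n+1}$.

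By Corollary \ref{contcor} combined with Theorem \ref{pframecond}, $\{g_{n}\}$ is a standard Parseval frame in $WC(X, L^{2}(\mu_{x}))$, so $V_{\mu_{\{x\}}}$ is precisely its analysis operator; in particular it is an adjointable isometry, the reconstruction identity $V_{\mu_{\{x\}}}^{*} V_{\mu_{\{x\}}} = I$ holds, and its adjoint is the synthesis map $V_{\mu_{\{x\}}}^{*}(\sum_{k} c_{k} w^{k}) = \sum_{k} c_{k} g_{k}$. Let $M_{w}$ denote multiplication by $w$ on $C(X, H^{2}(\mathbb{D}))$, which is an isometry. I would then define
\[
S_{0} := V_{\mu_{\{x\}}}^{*} \circ M_{w} \circ V_{\mu_{\{x\}}}.
\]
As a composition of bounded operators, $S_{0}$ is bounded with norm at most $1$.

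The main step, which I expect will require the most care, is verifying that $S_{0} g_{n} = g_{n+1}$. By Theorem \ref{imageNCT}, $\ker V_{\mu_{\{x\}}}^{*} = [Im\, V_{\mu_{\{x\}}}]^{\perp} = b^{x}(w)\, C(X, H^{2}(\mathbb{D}))$, and the crucial observation is that this kernel is $M_{w}$-invariant, because $w \cdot b^{x}(w) h(x, w) = b^{x}(w)(w h(x, w))$. Now both $V_{\mu_{\{x\}}}(g_{n})$ and the monomial $w^{n}$ (viewed as a constant-in-$x$ element of $C(X, H^{2}(\mathbb{D}))$) are carried to $g_{n}$ by $V_{\mu_{\{x\}}}^{*}$—the former by Parseval reconstruction, the latter by the synthesis formula—so their difference lies in $\ker V_{\mu_{\{x\}}}^{*}$. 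By the $M_{w}$-invariance of this kernel, $M_{w} V_{\mu_{\{x\}}}(g_{n}) - w^{n+1}$ also lies in $\ker V_{\mu_{\{x\}}}^{*}$, whence
\[
S_{0} g_{n} = V_{\mu_{\{x\}}}^{*}(M_{w} V_{\mu_{\{x\}}}(g_{n})) = V_{\mu_{\{x\}}}^{*}(w^{n+1}) = g_{n+1}.
\]
Setting $T := B^{*} S_{0} (B^{*})^{-1}$ then produces the desired bounded operator on $H$ satisfying $Tf_{n} = f_{n+1}$.
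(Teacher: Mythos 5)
Your argument is correct, and while it shares the paper's overall reduction---produce a bounded operator $S_{0}$ on $WC(X,L^{2}(\mu_{x}))$ with $S_{0}g_{n}=g_{n+1}$ and then conjugate by the invertible operator supplied by Corollary \ref{equivalenttoauxcor}---your construction of $S_{0}$ is genuinely different. The paper writes the shift down explicitly as $f\mapsto e^{2\pi i y}f-\langle e^{2\pi i y}f,1\rangle$ and checks $g_{n}=S_{0}^{n}g_{0}$ directly from stationarity and the recursion defining the auxiliary sequence; you instead realize it as the compression $V_{\mu_{\{x\}}}^{*}M_{w}V_{\mu_{\{x\}}}$ of the forward shift to the model space, verified through invariance of $\ker V_{\mu_{\{x\}}}^{*}$ under $M_{w}$. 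Your route makes the analogy with the classical model-space/Clark picture explicit and avoids the inductive verification, at the cost of invoking Theorem \ref{imageNCT} and the adjointability of the analysis operator of a standard Parseval frame; the paper's route is more elementary and self-contained. The two operators in fact coincide, since both send $g_{n}$ to $g_{n+1}$ and the $C(X)$-span of $\{g_{n}\}$ is dense (it is a standard Parseval frame).

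One imprecision to repair: you assert $\ker V_{\mu_{\{x\}}}^{*}=[\mathrm{Im}\,V_{\mu_{\{x\}}}]^{\perp}=b^{x}(w)C(X,H^{2}(\mathbb{D}))$. Theorem \ref{imageNCT} gives $\mathrm{Im}\,V_{\mu_{\{x\}}}=[b^{x}(w)C(X,H^{2}(\mathbb{D}))]^{\perp}$, so what you actually know is $\ker V_{\mu_{\{x\}}}^{*}=[b^{x}(w)C(X,H^{2}(\mathbb{D}))]^{\perp\perp}$, and in a Hilbert $C^{*}$-module a double orthogonal complement need not collapse back to the original submodule. This does not break the proof, because the only property you use is $M_{w}$-invariance, which the double complement inherits: for $g\in[b^{x}(w)C(X,H^{2}(\mathbb{D}))]^{\perp}$ one has $\langle M_{w}^{*}g,\,b^{x}(w)f\rangle=\langle g,\,b^{x}(w)(wf)\rangle=0$, so the first complement is $M_{w}^{*}$-invariant and hence $\ker V_{\mu_{\{x\}}}^{*}$ is $M_{w}$-invariant. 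State the invariance this way rather than through the unproved identification of the kernel with $b^{x}(w)C(X,H^{2}(\mathbb{D}))$.
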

\begin{proof}
Let $\{g_{n}\}$ be the auxiliary sequence of $\{e^{2\pi i ny}\}_{n=0}^{\infty}\subseteq WC(X, L^{2}(\mu_{x}))$. It can be checked by the stationary condition that $$g_{n}=T^{n}g_{0}$$ for all $n$ where $T:WC(X, L^{2}(\mu_{x}))\to WC(X, L^{2}(\mu_{x})) $ is given by
$$Tf=e^{2\pi i y}f-\langle e^{2\pi i y}f, 1\rangle_{WC(X, L^{2}(\mu_{x}))} .$$
Since $T$ is clearly bounded, the result follows from Corollary \ref{equivalenttoauxcor}.
\end{proof}

Unfortunately, we do not believe the converse of Corollary \ref{orbitframeresult} holds, even though an analogous converse does hold in the Hilbert space setting due to Beurling \cite{Beurling1948Two}.

\printbibliography

\end{document}